\documentclass[12pt,a4paper,oneside]{amsart}
\usepackage[left=1.1in, right=0.8in, top=1.0in, bottom=1.0in]{geometry}
\usepackage{graphicx}
\usepackage{setspace}
\usepackage{indentfirst} 
\usepackage{cases}
\usepackage{wrapfig}
\usepackage[toc,page]{appendix}
\usepackage{amsmath}
\usepackage{amsthm}
\usepackage{amssymb}
\usepackage{enumerate}
\usepackage{mathrsfs}
\usepackage{dcolumn}
\newcolumntype{L}{D{.}{.}{2,5}}
\theoremstyle{plain}
\newtheorem{thm}{Theorem}

\newtheorem{proposition}{Proposition}
\newtheorem{lemma}{Lemma}
\newtheorem{corollary}[proposition]{Corollary}
\newtheorem{mydef}{Definition}

\DeclareMathOperator{\Id}{Id}
\DeclareMathOperator{\Fix}{Fix}

\DeclareMathOperator{\co}{co}

\DeclareMathOperator{\CAT}{CAT}
\newtheorem{remark}{Remark}
\theoremstyle{definition}

\usepackage{cite}
 \usepackage{hyperref}
 \hypersetup{
    colorlinks=true,
    linkcolor=black,
    filecolor=magenta,      
    urlcolor=cyan}

\begin{document}

\title{On a notion of averaged operators in $\CAT(0)$ spaces}
\author{
{Arian B\"erd\"ellima}}
% \thanks{Institute for Numerical and Applied Mathematics, University of G\"ottingen.
% AB was supported by the Deutscher Akademischer Austauschdienst (DAAD).
% \texttt{arian.berdellima@mathematik.uni-goettingen.de}}

% \title[ On a notion of averaged operators in $\CAT(0)$ spaces]{\textbf On a notion of averaged operators in $\CAT(0)$ spaces}
% \author{Arian B\"erd\"ellima
% \thanks{Institute for Numerical and Applied Mathematics, University of G\"ottingen.
% AB was supported by the Deutscher Akademischer Austauschdienst (DAAD).
%     E-mail:  \texttt{arian.berdellima@mathematik.uni-goettingen.de}}}
% \date{\today}
 \address{University of G\"ottingen}\address{37083 G\"ottingen, Germany}
  \thanks{This work was supported by Deutscher Akademischer Austauschdienst (DAAD).\\
  Original results from author's Thesis.
  Electronic address: \texttt{berdellima@gmail.com}\\ MSC: 47H09, 46N10, 30L05}

 \begin{abstract}
Averaged operators have played an important role in fixed point theory in Hilbert spaces. They emerged as a necessity to obtain solutions to fixed point problems where the underlying operator is not contractive and thus renders Banach fixed point theorem inaccessible. We introduce a notion of {\em averaged operator} in the broader class of $\CAT(0)$ spaces. We call these operators $\alpha$-firmly nonexpansive and develop basic calculus rules for the quasi $\alpha$-firmly nonexpansive operators. In particular compositions of quasi $\alpha$-firmly nonexpansive operators is quasi $\alpha$-firmly nonexpansive and {\em convex combination} of a finite family of quasi $\alpha$-firmly nonexpansive operators is again quasi $\alpha$-firmly nonexpansive. For a nonexpansive operator $T:X\to X$ acting on a $\CAT(0)$ space $X$ we show that the iterates $x_n:=Tx_{n-1}$ converge weakly to some element in the fixed point set $\Fix T$ whenever $T$ is quasi $\alpha$-firmly nonexpansive. Moreover under a certain regularity condition the projections $P_{\Fix T}x_n$ converge strongly to this weak limit. 
Our theory is illustrated with two classical examples of cyclic and averaged projections. 
\end{abstract} 

\maketitle

\section{Introduction}\label{s:intro}
In a Hilbert space $(\mathcal{H},\|\cdot\|)$ an operator $T:\mathcal{H}\to \mathcal{H}$ is 
% nonexpansive if $\|Tx-Ty\|\leqslant \|x-y\|$ for all $x,y\in \mathcal{H}$. A proper class of nonexpansive operators are the so called
% $\alpha$-averaged operators. An operator $T:\mathcal{H}\to\mathcal{H}$ is said to be 
$\alpha$-averaged
with averaging constant $\alpha\in(0,1)$ if there exists a nonexpansive operator $A:\mathcal{H}\to\mathcal{H}$ such that 
\begin{equation}
 \label{eq:avg-operator}
 Tx=(1-\alpha)x+\alpha Ax,\;\forall x\in \mathcal{H}.
\end{equation}
This class of operators originates from the classical Krasnoselskij--Mann iteration method  attributed to Krasnoselskij \cite{Krasnoselski} and Mann \cite{Mann} which was successfully employed in a series of seminal paper by Browder \cite{Browder}, \cite{Browder1}, \cite{Browder2}, \cite{Browder3} and Browder and Petryshin \cite{BrowderPetryshin} about fixed points of nonlinear mappings in Banach spaces and their relations to \textit{variational inequalities}. In the particular case when $\alpha=1/2$ the class of operators $T$ in \eqref{eq:avg-operator} coincides with the set of {\em firmly nonexpansive} operators and in general the class of $\alpha$-averaged operators coincides with the set of {\em $\alpha$-firmly nonexpansive} operators (see Bauschke and Combette \cite{Bauschkebook}). While averaged operators are well understood in the setting of a Hilbert space, there is less concesus as to what could be a proper definition for such an operator in the setting of a more general metric space that might even lack a linear structure. Our attention in this note will be on the broader class of $\CAT(0)$ spaces, which includes in particular Hilbert spaces. While in general a $\CAT(0)$ space $(X,d)$, where $d$ is its canonical metric, is a nonlinear metric space, its geodesic space structure indeed allows for a meaningful interpretation of the expression in \eqref{eq:avg-operator}. In the context of a $\CAT(0)$ space $Tx$ is the point on the geodesic segment connecting $x$ with $Ax$ that is $\alpha d(x,Ax)$ units away from $x$. However such a notion comes with two main shortcomings. First note that $Tx$ being on the geodesic segment $[x,Ax]$ forces the existence of an operator $A$ such that the geodesic segment $[x,Tx]$ connecting $x$ with $Tx$ be extended beyond point $Tx$. This requires $(X,d)$ to have the so called {\em geodesic extension property}. While a large class of $\CAT(0)$ spaces satisfy this condition, not all of them do (see Bridson and Haefliger \cite{Brid}). Second, even if a $\CAT(0)$ space has the geodesic extension property, this extension is not unique in general like for instance in polyhedral complexes. This bifurcation of geodesics introduces further ambiguity as for which extension one should select to define the operator $A$. Nevertheless there is a widely accepted notion for a firmly nonexpansive operator in a $\CAT(0)$ space, see Ba\v cak \cite{Bacak} or Lopez et al. \cite{Lopez}, that was adopted from the original definition of Bruck \cite{Bruck} who used it for operators acting in a Banach space. Browder \cite{Browder} used the same concept as Bruck, but under the name firmly contractive, in the setting of a Hilbert space. The original definition of Bruck does in fact coincide with the current notion of firmly nonexpansiveness in a Hilbert space. While this concept has proved to be very useful in fixed point theory in a $\CAT(0)$ space, it is yet not clear how to extend it to a definition which allows for the general averagedness of an operator. Motivated by this lack of a concept for an averaged operator in a $\CAT(0)$ space and its potential applications to fixed point theory the author considers in his Thesis \cite{Berdellima} a notion of firmly nonexpansiveness which avoids the ambiguities of geodesic extension and geodesic bifurcation and moreover it allows for an appropriate definition that essentially captures the idea of averagedness for an operator. The whole construct is based on a real valued mapping $\Delta_T$ associated to every operator $T$ 
which the author refers to as the {\em discrepancy} of an operator. Using $\Delta_T$ we are able to provide a meaningful and unambiguous notion for a $\alpha$-firmly nonexpansive operator in a $\CAT(0)$ space.

This short note is organized as follows. In Section \ref{s:concepts} we introduce the necessary concepts and notations. Later in Section \ref{s:calculus} we establish some basic calculus rules for the class of {\em quasi $\alpha$-firmly nonexpansive} operators. In Section \ref{s:projection} we present a general convergence theorem in a complete $\CAT(0)$ space illustrated with the classical cyclic and averaged projections.

\section{Preliminary definitions and notations}\label{s:concepts}
\subsection{$\CAT(0)$ spaces} Let $(X,d)$ be a metric space. A {\em geodesic segment} starting from $x\in X$ and ending at $y\in X$ is a mapping $\gamma:[0,1]\to X$ such that $\gamma(0)=x,\gamma(1)=y$ and $d(\gamma(t_1),\gamma(t_2))=|t_1-t_2|d(x,y)$ for all $t_1,t_2\in [0,1]$. Often we will denote a geodesic segment connecting $x$ and $y$ by $[x,y]$.
A metric space where every pair of elements is connected by a geodesic is called a {\em geodesic metric space}. If this geodesic is unique then we say the metric space is {\em uniquely geodesic}. In a uniquely geodesic space given $x,y\in X$ and $t\in[0,1]$ we denote by $x_t:=(1-t)x\oplus ty$ the element on the geodesic segment $[x,y]$ such that $d(x_t,x)=td(x,y)$. Given three elements $x,y,z\in X$ a {\em geodesic triangle}, denoted by $\Delta(x,y,z)$, is the union of the geodesic segments $[x,y],[y,z]$ and $[z,x]$. Note that unless $X$ is uniquely geodesic the geodesic triangle is not unique. To each geodesic triangle corresponds a {\em comparison triangle} in $\mathbb{R}^2$. A comparison triangle is the union of three line segments in $\mathbb{R}^2$ determined by three points $\overline{x},\overline{y},\overline{z}\in\mathbb{R}^2$ such that $d(x,y)=\|\overline{x}-\overline{y}\|, d(y,z)= \|\overline{y}-\overline{z}\|$ and $d(z,x)=\|\overline{z}-\overline{x}\|$. We denote it by $\Delta(\overline{x},\overline{y},\overline{z})$. In a uniquely geodesic metric space the comparison triangle is unique up to an isometry. Given a point $p\in \Delta(x,y,z)$, say $p\in[x,y]$, a point $\overline{p}\in\Delta(\overline{x},\overline{y},\overline{z})$ is a {\em comparison point} for $p$ if $\overline{p}\in[\overline{x},\overline{y}]$ and $d(x,p)=\|\overline{x}-\overline{p}\|$. A geodesic triangle satisfies the {\em $\CAT(0)$ inequality} if for any two points $p,q$ in the geodesic triangle we have
\begin{equation}
 \label{eq:CAT(0)-ineq}
 d(p,q)\leqslant \|\overline{p}-\overline{q}\|.
\end{equation}
If this inequality holds for any geodesic triangle then $(X,d)$ is said to be a $\CAT(0)$ space. Often one refers to a $\CAT(0)$ space as a metric space of nonpositive curvature in the sense of Alexandrov (see Ballman \cite{Ballman} or Bridson and Haefliger \cite{Brid}). This definition using comparison triangles was originally introduced by Alexandrov \cite{Alexandrov}. Mikhael Gromov gave prominence to Alexandrov’s
definition for what it might mean for a metric space to have a
curvature bounded above by a real number $\kappa$ which he called the
$\CAT(\kappa)$ inequality (see Gromov \cite{Gromov},\cite{Gromov2}).
An equivalent characterization of a $\CAT(0)$ space \cite[Definition 1.2.1]{Bacak} is given by the following quadratic inequality
\begin{equation}
 \label{eq:quadratic}
 d(x_t,z)^2\leqslant (1-t)d(x,z)^2+td(y,z)^2-t(1-t)d(x,y)^2,\;\forall x,y,z\in X,\forall t\in[0,1].
\end{equation}
It follows from \eqref{eq:quadratic} that a $\CAT(0)$ space is uniquely geodesic and moreover the geodesics depend continuously on their endpoints \cite[Lemma 1.2.2]{Bacak}. 

\subsection{Convexity and projections}
Let $(X,d)$ be a $\CAT(0)$ space. A set $C\subseteq X$ is convex if for any $x,y\in C$ the geodesic segment $[x,y]$ is entirely contained in $C$. For a given set $S\subseteq X$ the convex hull of $S$, denoted by $\co S$, is the smallest convex set in $X$ that contains $S$. Clearly $S=\co S$ if and only if $S$ is a convex set. The notion of convexity is extended naturally to real valued functions as follows. 
A function $f:X\to \mathbb{R}$ is convex if for any $x,y\in X$ and $t\in[0,1]$ 
 \begin{equation}
  \label{eq:convexfunction}
  f(x_t)\leqslant (1-t)f(x)+tf(y).
 \end{equation}
A function $f:X\times X\to \mathbb{R}$ is jointly convex if for all $x_0,x_1,y_0,y_1\in X$ and $t\in[0,1]$ 
\begin{equation}
 \label{eq:jointlyconvexfunciton}
 f(x_t,y_t)\leqslant (1-t)f(x_0,y_0)+tf(x_1,y_1)
\end{equation}
where $x_t:=(1-t)x_0\oplus tx_1$ and $y_t:=(1-t)y_0\oplus ty_1$. The metric function $f(x,y):=d(x,y)$ is jointly convex and in particular convex. A function $f:X\to \mathbb{R}$ is {\em strongly convex} if there exists some parameter $\mu>0$ such that 
\begin{equation}
 \label{eq:stronglyconvex}
 f(x_t)\leqslant (1-t)f(x)+tf(y)-\frac{\mu}{2}t(1-t)d(x,y)^2,\;\forall x,y\in X,\forall t\in[0,1].
\end{equation}
From \eqref{eq:quadratic} it immediately follows that $f(x):=d(x,z)^2$ is strongly convex with parameter $\mu=2$ for every $z\in X$. In fact this characterizes all metric spaces that are $\CAT(0)$. An important result concerning lower semicontinuous strongly convex functions is the following. Note that a function $f:X\to X$ is lower semicontinuous whenever $f(x)\leqslant \liminf_{y\to x}f(y)$ for all $x\in X$.
\begin{proposition}[\hspace{-0.03em}{\cite[Proposition 2.2.17]{Bacak}}]
\label{prop1}
 Let $(X,d)$ be a complete $\CAT(0)$ space. Let $f: X\to (-\infty,+\infty]$ be a lower semicontinuous strongly convex function with parameter $\mu>0$. Then $f$ has a unique minimizer $x\in X$ and each minimizing sequence converges to $x$. Moreover 
\begin{equation}
 \label{eq:strcvxidentity}
 f(x)+\frac{\mu}{2} d(x,y)^2\leqslant f(y),\;\forall y\in X
\end{equation}
\end{proposition}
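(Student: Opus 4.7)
My plan is to combine a midpoint application of strong convexity (which yields both uniqueness and a Cauchy minimizing sequence) with lower semicontinuity plus completeness (to pass to the limit), and then to obtain the quadratic separation \eqref{eq:strcvxidentity} by letting the convexity parameter tend to $0$. For uniqueness, if $x_1$ and $x_2$ both minimize $f$ with common value $m := \inf_X f$, then \eqref{eq:stronglyconvex} at $t = 1/2$ applied to the pair $(x_1, x_2)$ gives $m \leq f\bigl((1/2)x_1 \oplus (1/2)x_2\bigr) \leq m - (\mu/8)\,d(x_1, x_2)^2$, forcing $x_1 = x_2$. The identical midpoint move applied to a minimizing sequence $(x_n)$,
\begin{equation*}
m \;\leq\; f\!\left(\tfrac{1}{2}x_n \oplus \tfrac{1}{2}x_k\right) \;\leq\; \tfrac{1}{2}\bigl(f(x_n) + f(x_k)\bigr) - \tfrac{\mu}{8}\,d(x_n, x_k)^2,
\end{equation*}
rearranges to $d(x_n, x_k)^2 \leq (4/\mu)\bigl(f(x_n) + f(x_k) - 2m\bigr) \to 0$, so $(x_n)$ is Cauchy; completeness of $X$ produces a limit $x$, lower semicontinuity yields $f(x) \leq \liminf_n f(x_n) = m$, and hence every minimizing sequence converges to the unique minimizer $x$.

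The main obstacle in this route is that the Cauchy estimate only bites once we know $m > -\infty$, so I would first establish that $f$ is coercive. Fixing $x_0 \in \dom f$ and taking $y \in X$ with $R := d(x_0, y) \geq 1$, I would apply \eqref{eq:stronglyconvex} at $t = 1/R$ to the pair $(x_0, y)$: the point $z_R := (1-1/R)\,x_0 \oplus (1/R)\,y$ lies in the closed unit ball around $x_0$, and solving the resulting inequality for $f(y)$ gives
\begin{equation*}
f(y) \;\geq\; R\,f(z_R) - (R-1)\,f(x_0) + \tfrac{\mu}{2}\,R(R-1),
\end{equation*}
which grows quadratically in $R$ as soon as $f$ is bounded below on that unit ball. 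Local lower-boundedness of $f$ on the unit ball can be obtained by combining lower semicontinuity with the $\Delta$-compactness of bounded sequences in a complete $\CAT(0)$ space and the $\Delta$-lower semicontinuity of convex lsc functions, so that a sequence realizing $\inf_{\overline{B}(x_0,1)} f$ admits a $\Delta$-cluster point at which $f$ is finite. Once coercivity is in hand, every minimizing sequence stays bounded and the Cauchy argument of the previous paragraph closes.

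Finally, for the sharpened inequality \eqref{eq:strcvxidentity}, I would apply strong convexity at the minimizer $x$ and an arbitrary $y \in X$ with $x_t := (1-t)\,x \oplus t\,y$:
\begin{equation*}
f(x) \;\leq\; f(x_t) \;\leq\; (1-t)\,f(x) + t\,f(y) - \tfrac{\mu}{2}\,t(1-t)\,d(x,y)^2.
\end{equation*}
Cancelling $(1-t)f(x)$ and dividing by $t \in (0, 1]$ yields $f(x) \leq f(y) - (\mu/2)(1-t)\,d(x,y)^2$, and letting $t \downarrow 0$ delivers \eqref{eq:strcvxidentity}. Thus the only truly delicate step is the coercivity/lower-boundedness argument in paragraph two; uniqueness, the Cauchy property of minimizing sequences, and the quadratic separation all drop out mechanically from \eqref{eq:stronglyconvex} at a well-chosen parameter.
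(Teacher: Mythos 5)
The paper does not prove Proposition \ref{prop1} at all: it is imported verbatim from Ba\v cak's book, so there is no internal argument to compare against. Judged on its own, your proof is correct and complete in outline. The uniqueness and Cauchy steps via the midpoint inequality, and the derivation of \eqref{eq:strcvxidentity} by cancelling $(1-t)f(x)$ and letting $t\downarrow 0$, are exactly right (with the harmless convention that \eqref{eq:strcvxidentity} is trivial when $f(y)=+\infty$), and you correctly isolate the one genuinely delicate point, namely that $\inf_X f>-\infty$, which you settle through coercivity once $f$ is bounded below on a unit ball. Two remarks on that step. First, your route through $\Delta$-compactness of bounded sequences and $\Delta$-lower semicontinuity of convex lsc functions does work, but you should make the bootstrapping explicit to avoid an apparent circularity: asymptotic centers (hence the whole $\Delta$-machinery) are themselves obtained as minimizers of the \emph{nonnegative} strongly convex function $\limsup_n d(\cdot,x_n)^2$, for which your Cauchy argument already applies with no lower-boundedness issue; so one should first record the proposition for functions bounded below, then build $\Delta$-convergence, then return for the general case. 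Second, the $\Delta$-lsc argument via sublevel sets also covers the case $\inf_{\overline{B}(x_0,1)}f=-\infty$ directly (the cluster point would lie in every sublevel set, forcing the value $-\infty$, contradicting the codomain), which is worth stating since that is precisely the case you need to exclude. With those two sentences added, the proof stands; it is heavier machinery than strictly necessary for the local lower bound, but it buys a self-contained argument where the paper offers only a citation.
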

Let $C\subseteq X$. The metric projection operator $P_C:X\to C$ on the set $C$ is defined as $P_Cx:=\{y\in C\;:\; d(x,y)=d(x,C)\}$. When $C$ is a closed convex set in a complete $\CAT(0)$ space \footnote{Complete $\CAT(0)$ spaces are also known as {\em Hadamard spaces}.} then $P_Cx$ is nonempty and unique for every $x\in X$ (\cite[Proposition 2.4]{Berdellima}). Moreover the following inequality holds
\begin{equation}
 \label{eq:projectionsineq}
 d(x,P_Cx)^2+d(P_Cx,y)^2\leqslant d(x,y)^2,\;\forall x\in X,\forall y\in C.
\end{equation}

\subsection{Nonexpansive operators} Let $(X,d)$ be a metric space. An operator $T:X\to X$ is nonexpansive on $X$ if 
\begin{equation}
 \label{eq:nonexpansive}
 d(Tx,Ty)\leqslant d(x,y),\;\forall x,y\in X.
\end{equation}
Let $\Fix T:=\{x\in X\,:\,Tx=x\}$ denote the fixed point set of an operator $T$.
If $\Fix T\neq\emptyset$ and inequality \eqref{eq:nonexpansive} holds for all $x\in X $ and for all $y\in \Fix T$ we say that $T$ is {\em quasi nonexpansive}.
In a $\CAT(0)$ space the class of (quasi) nonexpansive operators is well behaved under compositions and convex combinations. More precisely if $T_1,T_2:X\to X$ are two (quasi) nonexpansive operators then $T_1T_2$ (also $T_2T_1$) is (quasi) nonexpansive. In general the composition of any finite number of (quasi) nonexpansive operators is also (quasi) nonexpansive. Likewise for any $s\in [0,1]$ the {\em convex combination} of two operators $T:=(1-s)T_1\oplus sT_2$ is (quasi) nonexpansive whenever $T_1,T_2$ are (quasi) nonexpansive. For more details and calculations refer to \cite[Chapter 7]{Berdellima}. 

\subsection{Discrepancy of an operator} Let $(X,d)$ be a metric space. Following Berg and Nikolaev \cite{Berg} each pair $(x,y)\in X\times X$ determines a {\em bound vector} denoted by $\overrightarrow{xy}$. The point $x$ is called the tail of $\overrightarrow{xy}$ and $y$ is called the head. The length of a bound vector $\overrightarrow{xy}$ equals the metric distance $d(x,y)$. Let 
 \begin{equation}
 \label{eq:qlin}
 \langle\overrightarrow{xz},\overrightarrow{yw}\rangle:=\frac{1}{2}\Big(d(x,w)^2+d(z,y)^2-d(x,y)^2-d(z,w)^2\Big). 
 \end{equation}
In a Hilbert space $\mathcal{H}$ expression in \eqref{eq:qlin} coincides with the canonical inner product, i.e. $\langle\overrightarrow{xz},\overrightarrow{yw}\rangle\equiv \langle z-x, w-y\rangle$ for all $x,y,z,w\in\mathcal{H}$. 
Given an operator $T:X\to X$ the {\em discrepancy} of $T$ is the mapping $\Delta_T:X\times X\to\mathbb{R}$ defined as 
\begin{equation}
 \label{eq:discrepancy}
 \Delta_T(x,y):=\langle\overrightarrow{xy},\overrightarrow{TxTy}\rangle
%  \frac{1}{2}\Big(d(x,Ty)^2+d(y,Tx)^2-d(x,Tx)^2-d(y,Ty)^2\Big)
,\;\forall x,y\in X.
\end{equation}
The discrepancy is symmetric i.e. $\Delta_T(x,y)=\Delta_T(y,x)$ for all $x,y\in X$, it vanishes identically whenever $T$ is a constant mapping and it equals $d(x,y)^2$ if $T$ coincides with the identity operator $\Id$. Moreover $\Delta_T$ is continuous on $X\times X$ if $T$ is continuous on $X$.
% When $(X,d)$ is a $\CAT(0)$ space then by a well known theorem of Berg and Nikolaev \cite[Theorem 1, Corollary 3]{Berg} we have that $|\langle\overrightarrow{xz},\overrightarrow{yw}\rangle|\leqslant d(x,z)d(y,w)$ for all $x,y,z,w\in X$; this implies in particular that $|\Delta_T(x,y)|\leqslant d(x,y)d(Tx,Ty)$ for all $x,y\in X$. As a consequence of this the discrepancy $\Delta_T$ of a continuous operator $T$ is always continuous on $X\times X$.

\subsection{$\alpha$-firmly nonexpansive operators}
Let $(\mathcal{H},\|\cdot\|)$ be a Hilbert space. An operator $T:\mathcal{H}\to\mathcal{H}$ is $\alpha$-firmly nonexpansive on $\mathcal{H}$ if there is a constant $\alpha\in(0,1)$ such that 
\begin{equation}
 \label{eq:avg-identity}
 \|Tx-Ty\|^2+(1-2\alpha)\|x-y\|^2\leqslant 2(1-\alpha)\langle x-y,Tx-Ty\rangle,
\qquad \forall x,y\in \mathcal{H}.
\end{equation}
In a Hilbert space it is known that an operator $T$ is $\alpha$-firmly nonexpansive if and only if $T$ is an $\alpha$-averaged operator with averaging constant $\alpha\in(0,1)$ (see \cite[Proposition 2.1]{BauCom11} or \cite[Proposition 2.1]{LukTamTha18} for the pointwise (almost) $\alpha$-averaged variant). 
Using the discrepancy of an operator we can extend such a notion to a $\CAT(0)$ space in its generality. 
\begin{mydef}
 Let $(X,d)$ be a $\CAT(0)$ space and $D,E\subseteq X$ nonempty sets. An operator $T:X\to X$ 
 is $\alpha$-firmly nonexpansive on $D\times E$ if there exists an $\alpha\in(0,1)$ such that 
\begin{equation}
\label{eq:avghadamard}
d(Tx,Ty)^2+(1-2\alpha)d(x,y)^2\leqslant2(1-\alpha)\Delta_T(x,y),\qquad\forall x\in D,\forall y\in E
\end{equation}
If $D=E$ then $T$ is $\alpha$-firmly nonexpansive on $D$. 
If $D=E=X$ the mapping 
$T$ is simply said to be $\alpha$-firmly nonexpansive. If the set of fixed points $\Fix T\neq\emptyset$ and $D=\Fix T, E=X$ then $T$ is said to be a quasi $\alpha$-firmly nonexpansive operator.   
\end{mydef}
Following this definition an operator $T:X\to X$ is said to be firmly nonexpansive if and only if it is $\alpha$-firmly nonexpansive with $\alpha=1/2$ i.e. $d(Tx,Ty)^2\leqslant \Delta_T(x,y)$ for all $x,y\in X$. \footnote{It was brought later to the attention of the author that operators satisfying such an inequality, surely not expressed in terms of the discrepancy $\Delta_T$, are said to satisfy property ($P_2$), see Lopez et al \cite{LopezP2}.} This notion of firm nonexpansiveness allows for a natural definition for what it might mean for an operator to be {\em monotone}. We can say that an operator $T$ is monotone whenever $\Delta_T(x,y)\geqslant 0$ for all $x,y\in X$. Then it is immediate that a firmly nonexpansive operator is monotone. This fact is in analogy with the same implication in the setting of a Hilbert space. However here we will not explore monotone operators. We leave them for future research.  
In the current literature as mentioned in the introduction, there exists a notion of firmly nonexpansiveness. Given an operator $T:X\to X$ and $x,y\in X$ let $x_t:=(1-t)x\oplus tTx$ and $y_t:=(1-t)y\oplus tTy$. Define the function $\phi_T(t):=d(x_t,y_t)$. The operator $T$ is said to be firmly nonexpansive if $\phi_T(t)$ is nonincreasing on $[0,1]$. It can be shown that this notion of firmly nonexpansiveness indeed implies our definition of firmly nonexpansiveness \cite[Proposition 7.5]{Berdellima}.
Note that when an operator $T$ is $\alpha$-firmly nonexpansive with $\alpha\leqslant 1/2$ then $\Delta_T(x,y)\geqslant 0$ for all $x,y\in X$. 
A metric space $(X,d)$ satisfies the Cauchy--Schwartz inequality if $|\langle\overrightarrow{xz},\overrightarrow{yw}\rangle|\leqslant d(x,z)d(y,w)$ for all $x,y,z,w\in X$. An important theorem of Berg and Nikolaev \cite[Theorem 1, Corollary 3]{Berg} states that a metric space satisfies Cauchy--Schwartz inequality if and 
only if it is a $\CAT(0)$ space. This has an important implication for our definition of firmly nonexpansiveness. An operator is nonexpansive whenever it is firmly nonexpansive. Therefore there is a degree of consistency of our notion of firmly nonexpansiveness. Afterall it is desirable that firmly nonexpansiveness be a stronger {\em contraction} property than nonexpansiveness. 

\subsection{Weak convergence} 
\label{ss:weak convergence}
\begin{mydef}
 \label{d:weakconvergence}
 Let $\Gamma_x(X)$ denote the set of all geodesic segments $\gamma:[0,1]\to X$ that emanate from $x$. A sequence $(x_n)_{n\in\mathbb{N}}\subseteq X$ {\em converges weakly} to $x$ and we denote it by $x_n\overset{w}\to x$ if and only if $\lim_nd(x,P_{\gamma}x_n)=0$ for all $\gamma\in \Gamma_x(X)$. 
\end{mydef}
Weak limits are unique. Indeed let $x,y$ be both weak limit points of a sequence $(x_n)_{n\in\mathbb{N}}$. Let $\gamma,\widetilde{\gamma}:[0,1]\to X$ be two geodesics such that $\gamma(0)=\widetilde{\gamma}(1)=x$ and $\gamma(1)=\widetilde{\gamma}(0)=y$. Because a $\CAT(0)$ space is uniquely geodesic then the image of both mappings $\gamma,\widetilde{\gamma}$ coincide i.e. $\gamma([0,1])=\widetilde{\gamma}([0,1])=[x,y]$. In particular $P_{\gamma}z=P_{\widetilde{\gamma}}z$ for all $z\in X$.
From triangle inequality we then obtain 
$$0\leqslant d(x,y)\leqslant d(x,P_{\gamma}x_n)+d(P_{\gamma}x_n,y)=d(x,P_{\gamma}x_n)+d(P_{\widetilde{\gamma}}x_n,y)\to 0\;\text{as}\;n\uparrow+\infty,$$
implying $x=y$. 
% Weak convergence has many satisfactory properties including:
% \begin{enumerate}
%  \item unique limits: if $x_n\overset{w}\to x$ and $x_n\overset{w}\to y$ then $x=y$
%  \item Opial's property: if $x_n\overset{w}\to x$ then $\liminf_nd(x_n,x)<\liminf_nd(x_n,y)$ for all $y\neq x$
%  \item every bounded sequence has a weakly convergent subsequence.
% \end{enumerate}
Moreover in a locally compact space weak and strong convergence coincide \cite[Theorem 3.25]{Berdellima}.
For more details refer to \cite[Chapter 3]{Berdellima}. 
An element $x\in X$ is an {\em asymptotic center} for a  sequence $(x_n)_{n\in\mathbb{N}}\subseteq X$ whenever 
\begin{equation}
 \label{eq:asymptoticcenter}
 \limsup_nd(x_n,x)\leqslant \limsup_nd(x_n,y),\;\forall y\in X.
\end{equation}
The asymptotic center of each sequence exists and it is unique because it can be realized as the minimizer of the strongly convex function $f(x;(x_n)_{n\in\mathbb{N}}):=\limsup_nd(x,x_n)^2$.
\begin{mydef}[$\Delta$-convergence, Lim \cite{Lim} (1976)]
\label{d:deltaconvergence}
A sequence $(x_n)_{n\in\mathbb{N}}$ $\Delta$-converges to $x$, $x_n\overset{\Delta}\to x$, whenever $x$ is the asymptotic center of each subsequence of $(x_n)_{n\in\mathbb{N}}$. 
\end{mydef}
On bounded sets weak convergence coincides with $\Delta$-convergence (Ba\v cak \cite[Proposition 3.1.3]{Bacak}). 
Therefore in a $\CAT(0)$ space $\Delta$-limits are unique. $\Delta$-convergence for metric spaces was later brought in the context of $\CAT(0)$ spaces by Kirk and Panyanak \cite{Kirk}.

\section{Calculus of quasi $\alpha$-firmly nonexpansive operators}
\label{s:calculus}

\subsection{Some preliminary results}
\begin{lemma}\label{l:afne implies ne}
Let $(X,d)$ be a $\CAT(0)$ space. Then
\begin{enumerate}[(i)]
\item \label{eq:one} An operator $T:X\to X$ is nonexpansive whenever $T$ is $\alpha$-firmly nonexpansive.
 \item  \label{eq:two} $T$ is quasi $\alpha$-firmly nonexpansive if and only if
\begin{equation}
 \label{eq:avghadamard2}
 d(Tx,y)^2\leqslant d(x,y)^2-\frac{1-\alpha}{\alpha}d(x,Tx)^2,\quad \forall x\in X.
\end{equation}
In particular if $T$ is quasi $\alpha$-firmly nonexpansive then it is quasi nonexpansive.
\end{enumerate}
\end{lemma}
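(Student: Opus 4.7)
The plan is to treat the two parts separately. For \textit{(i)}, I would appeal to the Berg--Nikolaev characterization of $\CAT(0)$ spaces mentioned earlier in the excerpt, which supplies the Cauchy--Schwartz inequality
\[
\Delta_T(x,y) \leqslant |\Delta_T(x,y)| \leqslant d(x,y)\,d(Tx,Ty).
\]
Inserting this upper bound into the defining inequality \eqref{eq:avghadamard} and setting $a:=d(Tx,Ty)$, $b:=d(x,y)$, one obtains a quadratic relation that factors as $(a-b)\bigl(a-(1-2\alpha)b\bigr)\leqslant 0$. If $a>b$, both factors are strictly positive (since $\alpha\in(0,1)$ forces $(1-2\alpha)b<b<a$ when $b>0$, and $a>0$ handles $b=0$), which is a contradiction; hence $a\leqslant b$, i.e.\ $T$ is nonexpansive.

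For \textit{(ii)}, the key observation is that when $y\in\Fix T$, so $Ty=y$ and $d(y,Ty)=0$, the four-term expression in \eqref{eq:qlin} applied to \eqref{eq:discrepancy} collapses to
\[
\Delta_T(x,y) = \tfrac{1}{2}\bigl(d(x,y)^2 + d(Tx,y)^2 - d(x,Tx)^2\bigr).
\]
Substituting this into the defining inequality \eqref{eq:avghadamard} for the quasi case (with the roles of the two arguments transposed via the symmetry of $\Delta_T$) and gathering terms makes the coefficient of $d(Tx,y)^2$ equal to $\alpha$ and that of $d(x,y)^2$ also equal to $\alpha$, so dividing by $\alpha>0$ yields \eqref{eq:avghadamard2}. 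Every step is reversible, giving the biconditional. Quasi nonexpansiveness is then immediate from \eqref{eq:avghadamard2} by discarding the nonnegative penalty term $\tfrac{1-\alpha}{\alpha}d(x,Tx)^2$.

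The only substantive external ingredient is the Berg--Nikolaev equivalence used in part (i); the remainder is purely algebraic manipulation. The single subtlety to monitor is the sign of $1-2\alpha$, which changes across $\alpha=1/2$, but the factored form in (i) handles both regimes uniformly, and in (ii) this coefficient is absorbed into the simplification of $\Delta_T$ before it can cause trouble. I do not anticipate any serious obstacle beyond careful bookkeeping of the coefficients.
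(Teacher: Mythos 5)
Your proposal is correct and follows essentially the same route as the paper: both parts hinge on the Berg--Nikolaev Cauchy--Schwartz inequality for (i) and on the collapse of $\Delta_T(x,y)$ to $\tfrac12\bigl(d(x,y)^2+d(Tx,y)^2-d(x,Tx)^2\bigr)$ when $y\in\Fix T$ for (ii). The only cosmetic difference is that the paper completes the square, bounding $d(x,y)^2-2\Delta_T(x,y)+d(Tx,Ty)^2\geqslant\bigl(d(x,y)-d(Tx,Ty)\bigr)^2\geqslant 0$ in its rearranged form of \eqref{eq:avghadamard}, whereas you factor the resulting quadratic in $d(Tx,Ty)$ and $d(x,y)$; both are the same Cauchy--Schwartz estimate in different algebraic packaging.
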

\begin{proof}
 Note that \eqref{eq:avghadamard} can be equivalently written as 
\begin{equation}
 \label{eq:avghadamard1}
 d(Tx,Ty)^2\leqslant d(x,y)^2-\frac{1-\alpha}{\alpha}\Big(d(x,y)^2-2\Delta_T(x,y)+d(Tx,Ty)^2\Big).
\end{equation}
By Cauchy--Schwartz inequality it follows that
$$d(x,y)^2-2\Delta_T(x,y)+d(Tx,Ty)^2\geqslant(d(x,y)-d(Tx,Ty))^2\geqslant0.$$
This shows \eqref{eq:one}. Now let $y\in\Fix T$ then \eqref{eq:avghadamard1} reduces to \eqref{eq:avghadamard2}. Quasi nonexpansiveness follows immediately from \eqref{eq:avghadamard2}. This proves \eqref{eq:two}.
\end{proof}

\begin{lemma}\label{l:intersections}
Let $(X,d)$ be a metric space and let $S,T:X\to X$ be two operators such that $\Fix T\cap\Fix S\neq\emptyset$.
If $S$ is $\alpha$-firmly nonexpansive and 
$T$ is nonexpansive on 
 $\Fix TS\times(\Fix T\cap \Fix S)$.  Then 
 $\Fix TS=\Fix T\cap \Fix S$.
\end{lemma}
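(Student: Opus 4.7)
The plan is to prove one inclusion trivially and reduce the other to a single chain of two opposing inequalities. The easy direction $\Fix T\cap\Fix S\subseteq\Fix TS$ is immediate: if $x\in\Fix T\cap\Fix S$ then $TSx=Tx=x$. For the reverse inclusion I would fix any $x\in\Fix TS$ and any $z\in\Fix T\cap\Fix S$ (nonempty by hypothesis) and aim to show $Sx=x$; once that is done, $x=TSx=T(Sx)=Tx$ is automatic, placing $x$ in $\Fix T\cap\Fix S$.

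To extract $Sx=x$, I would exploit the $\alpha$-firm nonexpansiveness of $S$ at $(x,z)$. Because $Sz=z$, the discrepancy collapses to
\begin{equation*}
\Delta_S(x,z)=\tfrac{1}{2}\bigl(d(x,z)^2+d(Sx,z)^2-d(x,Sx)^2\bigr),
\end{equation*}
and substituting into \eqref{eq:avghadamard} and rearranging (exactly the manipulation already carried out in Lemma~\ref{l:afne implies ne}(ii)) yields the quasi $\alpha$-firm estimate
\begin{equation*}
d(Sx,z)^2\leqslant d(x,z)^2-\frac{1-\alpha}{\alpha}d(x,Sx)^2. \qquad(\star)
\end{equation*}
On the other side, nonexpansiveness of $T$ applied to the pair $(Sx,z)$, together with $Tz=z$ and $TSx=x$, gives $d(x,z)=d(TSx,Tz)\leqslant d(Sx,z)$. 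Squaring and feeding this back into $(\star)$ forces $\tfrac{1-\alpha}{\alpha}d(x,Sx)^2\leqslant 0$, whence $Sx=x$, and I am done.

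The only genuinely subtle point, and the one I expect to be the main obstacle, is the applicability of $T$'s nonexpansiveness to the pair $(Sx,z)$. A literal reading of the hypothesis ``$T$ nonexpansive on $\Fix TS\times(\Fix T\cap\Fix S)$'' would require $Sx\in\Fix TS$ for every $x\in\Fix TS$, which is not automatic from $TSx=x$ alone. I would read the hypothesis as asserting nonexpansiveness of $T$ on a set large enough to include each such pair $(Sx,z)$—for instance on $S(\Fix TS)\times(\Fix T\cap\Fix S)$, or more simply on all of $X$. Granting this, the entire proof is a one-line combination of $(\star)$ with the $T$-inequality; the derivation of $(\star)$ itself uses nothing beyond the definition of the discrepancy and elementary algebra, and in particular does not invoke the Cauchy--Schwartz structure of a $\CAT(0)$ space, which is consistent with the lemma being stated in the generality of a metric space.
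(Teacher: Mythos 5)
Your proof is correct and follows essentially the same route as the paper's: both combine the quasi $\alpha$-firm estimate for $S$ at a common fixed point with nonexpansiveness of $T$ applied to the pair $(Sx,z)$, and you merely streamline the paper's three-case contradiction argument into a single direct inequality chain. The domain issue you flag --- whether the pair $(Sx,z)$ actually lies in $\Fix TS\times(\Fix T\cap \Fix S)$ --- is present, and likewise left unaddressed, in the paper's own proof, so your liberal reading of the hypothesis is the right call.
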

\begin{proof}
 The inclusion $\Fix T\cap \Fix S\subseteq \Fix TS$ is obvious. Now let 
 $x\in \Fix TS$ and $y\in \Fix T\cap \Fix S$. There are three mutually exclusive 
 cases. First let $Sx\in\Fix T$ then $Sx=TSx=x$ implies $x\in \Fix T\cap \Fix S$. 
 Second let $x\in \Fix S$ then $x=TSx=Tx$ implies $x\in \Fix T\cap \Fix S$. Finally, 
 let $x\notin \Fix S$ and $Sx\notin \Fix T$. This yields
\begin{align*}
d(x,y)^2=d(TSx,Ty)^2\leqslant d(Sx,y)^2&=d(Sx,Sy)^2\\
&\leqslant d(x,y)^2-\frac{1-\alpha}{\alpha}\Big(d(x,y)^2-2\Delta_S(x,y)+
d(Sx,Sy)^2\Big)
\end{align*}
where the first inequality follows from nonexpansiveness of $T$, and the 
second inequality follows from $\alpha$-firmly nonexpansiveness of $S$ with some constant $\alpha$ on $\Fix TS$.  
Assumption $y\in \Fix S$ and $x\notin \Fix S$ imply 
$$d(Sx,Sy)^2\leqslant d(x,y)^2-\frac{1-\alpha}{\alpha}d(x,Sx)^2<d(x,y)^2,$$
but $d(x,y)^2<d(x,y)^2$ is impossible. 
Therefore $\Fix TS\subseteq \Fix T\cap\Fix S$.
\end{proof}

The next result requires the following 
quantities:
\begin{subequations}\label{eq:LMUV}
\begin{eqnarray}
 && L(x,y):= d(x,y)^2-2\Delta_S(x,y)+d(Sx,Sy)^2;\label{eq:L}\\
 &&M(x,y):=d(Sx,Sy)^2-2\Delta_T(Sx,Sy)+d(TSx,TSy)^2;\label{eq:M}\\
 &&U(x,y):=\Delta_{TS}(x,y)+d(Sx,Sy)^2-\Delta_S(x,y)-\Delta_T(Sx,Sy);
 \label{eq:U}\\
 &&V(x,y):= d(x,y)^2-2\Delta_{TS}(x,y)+d(TSx,TSy)^2.\label{eq:V}
\end{eqnarray}
\end{subequations}

\begin{lemma}\label{t:afne of compositions}
 Let $(X,d)$ be a $\CAT(0)$ space.
 Let 
 $S:X\to X$ be $\alpha$-firmly nonexpansive with constant $\alpha_S$
 and let $T:X\to X$ be $\alpha$-firmly nonexpansive with constant 
 $\alpha_T$.  Then the composition $TS$ is $\alpha$-firmly nonexpansive with constant 
 \begin{equation}\label{eq:alpha comp}
 \alpha_{TS}\equiv\frac{\alpha_S+\alpha_T-2\alpha_S\alpha_T}{1-\alpha_S\alpha_T}
 \end{equation}
 % and $T\in A_{\beta}(H)$ for some $\alpha,\beta\in(0,1)$. 
%  Then the composition $TS\in A_{\gamma}(H)$ for $\gamma$ given by 
 % \eqref{eq:gamma} 
 whenever 
 \begin{equation}
\label{eq:avgineqH}
 \Big(\frac{1-\alpha_{S}}{\tau\alpha_{S}}\Big)^2L(x,y)+\Big(\frac{1-\alpha_{T}}{\tau\alpha_{T}}\Big)^2M(x,y)+
 2\Big(\frac{1-\alpha_{S}}{\tau\alpha_{S}}\Big)\Big(\frac{1-\alpha_{T}}{\tau\alpha_{T}}\Big)U(x,y)\geqslant 0,
 \qquad\forall x,y\in X,
\end{equation}
where 
 \begin{equation}\label{eq:tau}
 \tau\equiv \frac{1-\alpha_S}{\alpha_S}+\frac{1-\alpha_T}{\alpha_T}.
 \end{equation}
\end{lemma}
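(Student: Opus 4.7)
The plan is to chain the two firm nonexpansiveness estimates via the equivalent reformulation \eqref{eq:avghadamard1} and then match coefficients with the target estimate for the composition. Set $\lambda_S:=(1-\alpha_S)/\alpha_S$, $\lambda_T:=(1-\alpha_T)/\alpha_T$ and $\lambda_{TS}:=(1-\alpha_{TS})/\alpha_{TS}$. Applying \eqref{eq:avghadamard1} to $S$ on the pair $(x,y)$ gives $d(Sx,Sy)^2\leqslant d(x,y)^2-\lambda_S L(x,y)$, and applying it to $T$ on the pair $(Sx,Sy)$ gives $d(TSx,TSy)^2\leqslant d(Sx,Sy)^2-\lambda_T M(x,y)$. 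Chaining these two inequalities yields
\[
d(TSx,TSy)^2\leqslant d(x,y)^2-\lambda_S L(x,y)-\lambda_T M(x,y).
\]

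The target $\alpha_{TS}$-firm nonexpansiveness of $TS$ is, via the same reformulation, equivalent to $d(TSx,TSy)^2\leqslant d(x,y)^2-\lambda_{TS}V(x,y)$. A direct computation from \eqref{eq:alpha comp} gives the constant identity $\lambda_{TS}=\lambda_S\lambda_T/(\lambda_S+\lambda_T)=\lambda_S\lambda_T/\tau$, and a straightforward expansion of \eqref{eq:L}--\eqref{eq:V} gives the pointwise identity $V(x,y)=L(x,y)+M(x,y)-2U(x,y)$. Consequently it suffices to prove the scalar inequality
\[
\lambda_S L+\lambda_T M\geqslant \lambda_{TS}(L+M-2U),
\]
equivalently $(\lambda_S-\lambda_{TS})L+(\lambda_T-\lambda_{TS})M+2\lambda_{TS}U\geqslant 0$. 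The elementary identities $\lambda_S-\lambda_{TS}=\lambda_S^2/\tau$, $\lambda_T-\lambda_{TS}=\lambda_T^2/\tau$ and $2\lambda_{TS}=2\lambda_S\lambda_T/\tau$ then recast the left-hand side, after dividing by $\tau$, as $(\lambda_S/\tau)^2L+(\lambda_T/\tau)^2M+2(\lambda_S/\tau)(\lambda_T/\tau)U$, which is exactly the nonnegative expression postulated in the hypothesis \eqref{eq:avgineqH}.

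Conceptually the argument is straightforward chaining, so the obstacle is bookkeeping: extracting the pointwise identity $V=L+M-2U$ cleanly from the definitions in \eqref{eq:LMUV} and verifying that the specific formula \eqref{eq:alpha comp} for $\alpha_{TS}$ is precisely the value dictated by the matching $\lambda_{TS}=\lambda_S\lambda_T/\tau$. The role of the assumption \eqref{eq:avgineqH} is to supply the positivity of a quadratic form in $L,M,U$ which in a Hilbert setting would be automatic — since the combination there collapses to a squared norm via the bilinearity of the inner product — but in a general $\CAT(0)$ space no analogous algebraic collapse is available, and so this positivity must be imposed as a hypothesis.
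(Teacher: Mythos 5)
Your proposal is correct and follows essentially the same route as the paper: chain the two reformulated inequalities to get $d(TSx,TSy)^2\leqslant d(x,y)^2-\lambda_S L-\lambda_T M$, then use the identities $V=L+M-2U$ and $\lambda_{TS}=\lambda_S\lambda_T/\tau$ to reduce the target to the quadratic-form positivity assumed in \eqref{eq:avgineqH}. The only cosmetic difference is that the paper packages the coefficient matching as a single algebraic identity for $\frac{\lambda_S}{\tau}L+\frac{\lambda_T}{\tau}M$, whereas you isolate $(\lambda_S-\lambda_{TS})L+(\lambda_T-\lambda_{TS})M+2\lambda_{TS}U\geqslant 0$ directly; these are the same computation.
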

\begin{proof}
Since $T$ is $\alpha$-firmly nonexpansive with 
constant $\alpha_T$  we have 
$$d(TSx,TSy)^2+(1-2\alpha_T)d(Sx,Sy)^2\leqslant 2(1-\alpha_T)\Delta_T(Sx,Sy)
\qquad\forall x,y\in X.$$
By \eqref{eq:avghadamard1} this is equivalent to 
$$d(TSx,TSy)^2\leqslant d(Sx,Sy)^2-\frac{1-\alpha_T}{\alpha_T}\Big(d(Sx,Sy)^2-2\Delta_T(Sx,Sy)+d(TSx,TSy)^2\Big)
\quad\forall x,y\in X.$$
Since $S$ is $\alpha$-firmly nonexpansive with 
constant $\alpha_S$ we have 
\begin{align*}
d(TSx,TSy)^2\leqslant d(x,y)^2&
-\frac{1-\alpha_S}{\alpha_S}\Big(d(x,y)^2-2\Delta_S(x,y)+d(Sx,Sy)^2)\Big)\\
&-\frac{1-\alpha_T}{\alpha_T}\Big(d(Sx,Sy)^2-2\Delta_T(Sx,Sy)+d(TSx,TSy)^2\Big)
\end{align*}
for all $x,y\in X$.  
A short calculation yields 
\begin{eqnarray*}
&&\frac{1-\alpha_S}{\tau\alpha_S}L+\frac{1-\alpha_T}{\tau\alpha_T}M=
\Big(\frac{1-\alpha_S}{\tau\alpha_S}\Big)^2L+
\Big(\frac{1-\alpha_T}{\tau\alpha_T}\Big)^2M+\\
&&\qquad\qquad\qquad\qquad\qquad 2\Big(\frac{1-\alpha_S}{\tau\alpha_S}\Big)
    \Big(\frac{1-\alpha_T}{\tau\alpha_T}\Big)U+ \Big(\frac{1-\alpha_S}{\tau\alpha_S}\Big)
    \Big(\frac{1-\alpha_T}{\tau\alpha_T}\Big)V, 
\end{eqnarray*}
where $\tau$ is given by \eqref{eq:tau} and $L, M, U$ and $V$ are given by \eqref{eq:LMUV}.  
By the Cauchy--Schwarz inequality we have $L,M\geqslant 0$.  If 
inequality \eqref{eq:avgineqH} holds, then 
$$\frac{1-\alpha_S}{\tau\alpha_S}L+\frac{1-\alpha_T}{\tau\alpha_T}M
\geqslant \Big(\frac{1-\alpha_S}{\tau\alpha_S}\Big)
\Big(\frac{1-\alpha_T}{\tau\alpha_T}\Big)V.$$
Therefore 
$$d(TSx,TSy)^2\leqslant d(x,y)^2-\frac{1-\alpha_S}{\alpha_S}L-
\frac{1-\alpha_T}{\alpha_T}M\leqslant d(x,y)^2-
\frac{1}{\tau}\Big(\frac{1-\alpha_S}{\alpha_S}\Big)
\Big(\frac{1-\alpha_T}{\alpha_T}\Big)V.$$
Subsituting for $V, \tau$ and $\alpha_{TS}$ we obtain 
$$d(TSx,TSy)^2\leqslant d(x,y)^2-
\frac{1-\alpha_{TS}}{\alpha_{TS}}\Big(d(x,y)^2-2\Delta_{TS}(x,y)+d(TSx,TSy)^2\Big).$$
Rearranging terms the last inequality is equivalent to
$$d(TSx,TSy)^2+(1-2\alpha_{TS})d(x,y)^2\leqslant
2(1-\alpha_{TS})\Delta_{TS}(x,y)\quad\forall x,y\in X,$$
as claimed. 
\end{proof}
\begin{proposition}
\label{p:fixT}
 Let $(X,d)$ be a $\CAT(0)$ space and $T:X\to X$ be quasi nonexpansive. Then $\Fix T$ is a closed convex set.
\end{proposition}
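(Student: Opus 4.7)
The plan is to establish closedness and convexity separately, each in essentially one line of calculation, exploiting the defining quasi-nonexpansive inequality $d(Tx,y)\leqslant d(x,y)$ for $x\in X$, $y\in\Fix T$ together with the CAT(0) quadratic inequality \eqref{eq:quadratic}.

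For closedness, I would take a sequence $(y_n)_{n\in\mathbb{N}}\subseteq\Fix T$ converging to some $x\in X$ and show $x\in\Fix T$. Applying quasi nonexpansiveness with the limit point $x$ in the role of the variable and each $y_n\in\Fix T$ in the role of the fixed point gives $d(Tx,y_n)\leqslant d(x,y_n)$. Sending $n\to\infty$ forces $d(Tx,y_n)\to 0$, hence $Tx=\lim_n y_n=x$ by uniqueness of limits. Note that this argument uses only the quasi-nonexpansive hypothesis and does not require $T$ to be continuous.

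For convexity, fix $y_0,y_1\in\Fix T$, let $t\in[0,1]$, and set $y_t:=(1-t)y_0\oplus ty_1$. The task is to show $Ty_t=y_t$. The key move is to apply the CAT(0) quadratic inequality \eqref{eq:quadratic} with $x=y_0$, $y=y_1$, and the specific choice $z=Ty_t$, which yields
\begin{equation*}
d(y_t,Ty_t)^2\leqslant (1-t)d(y_0,Ty_t)^2+t\,d(y_1,Ty_t)^2-t(1-t)d(y_0,y_1)^2.
\end{equation*}
Quasi nonexpansiveness bounds $d(y_i,Ty_t)\leqslant d(y_i,y_t)$ for $i=0,1$, and since $y_t$ lies on the geodesic $[y_0,y_1]$ one has $d(y_0,y_t)=t\,d(y_0,y_1)$ and $d(y_1,y_t)=(1-t)d(y_0,y_1)$. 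Substituting and simplifying the three terms, the coefficient of $d(y_0,y_1)^2$ collapses to $t(1-t)\bigl(t+(1-t)-1\bigr)=0$, so $d(y_t,Ty_t)=0$, i.e., $y_t\in\Fix T$.

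I do not anticipate any real obstacle here; the only subtle point is remembering that quasi nonexpansiveness permits the variable to be any point of $X$ while restricting the second argument to $\Fix T$, which is exactly the orientation needed in both parts of the proof.
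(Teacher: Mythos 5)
Your proposal is correct and follows essentially the same route as the paper: closedness via the quasi-nonexpansive inequality applied against the converging sequence of fixed points, and convexity via the $\CAT(0)$ quadratic inequality \eqref{eq:quadratic} with $z=Ty_t$, after which the coefficient of $d(y_0,y_1)^2$ vanishes. The only cosmetic difference is that you substitute the exact geodesic identities $d(y_0,y_t)=t\,d(y_0,y_1)$ and $d(y_1,y_t)=(1-t)\,d(y_0,y_1)$ directly, whereas the paper reaches the same numbers by a second application of strong convexity.
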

\begin{proof}
  Clearly $\Fix T$ is nonempty by definition of quasi nonexpansiveness. Let $(x_n)_{n\in\mathbb{N}}\subseteq \Fix T$ be a sequence of fixed points of $T$ converging to some element $x\in X$. Then by triangle inequality and quasi nonexpansiveness we obtain
 $$d(Tx,x)\leqslant d(Tx,x_n)+d(x_n,x)\leqslant 2d(x_n,x).$$
 In the limit as $n\uparrow +\infty$ we obtain
 $d(Tx,x)=0$ and thus $x\in \Fix T$. Therefore $\Fix T$ is a closed set. Now let $x_1,x_2\in\Fix T$ and $s\in[0,1]$. Consider the element $x_s:=(1-s)x_1\oplus sx_2$. Then by strong convexity of the square of the metric with parameter $\mu=2$ we get
 $$d(Tx_s,x_s)^2\leqslant (1-s)d(Tx_s,x_1)^2+sd(Tx_s,x_2)^2-s(1-s)d(x_1,x_2)^2.$$
 Because $T$ is quasi nonexpansive then 
 $d(Tx_s,x_1)\leqslant d(x_s,x_1)$ and $d(Tx_s,x_2)\leqslant d(x_s,x_2)$. Applying once strong convexity we finally get the upper estimate
 \begin{align*}
  d(Tx_s,x_s)^2&\leqslant (1-s)[sd(x_1,x_2)^2-s(1-s)d(x_1,x_2)^2]\\&+s[(1-s)d(x_1,x_2)^2-s(1-s)d(x_1,x_2)^2]-s(1-s)d(x_1,x_2)^2=0.
 \end{align*}
Therefore $x_s\in \Fix T$. Since $x_1,x_2\in\Fix T, s\in[0,1]$ are arbitrary then $\Fix T$ is convex.  
\end{proof}

\begin{proposition}
\label{p:projectionfne}
 Let $(X,d)$ be a complete $\CAT(0)$ space and $C\subseteq X$ be a closed convex set. Then $P_C:X\to C$ is firmly nonexpansive, in particular it is nonexpansive.
\end{proposition}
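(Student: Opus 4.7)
The plan is to reduce the firm nonexpansiveness of $P_C$ to the basic projection inequality \eqref{eq:projectionsineq}, applied twice symmetrically. Recall that ``firmly nonexpansive'' in this paper means $\alpha$-firmly nonexpansive with $\alpha = 1/2$, i.e.\ $d(P_Cx, P_Cy)^2 \leq \Delta_{P_C}(x,y)$ for all $x,y\in X$. Unfolding $\Delta_{P_C}(x,y)$ via \eqref{eq:discrepancy} and \eqref{eq:qlin} (with tails $x, P_Cx$ and heads $y, P_Cy$), this target inequality is algebraically equivalent to
$$d(x, P_Cx)^2 + d(y, P_Cy)^2 + 2\, d(P_Cx, P_Cy)^2 \;\leq\; d(x, P_Cy)^2 + d(y, P_Cx)^2.$$

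The key observation is that \eqref{eq:projectionsineq} lets me take the free variable $y\in C$ to be $P_Cy$. First I would apply \eqref{eq:projectionsineq} in this way to obtain
$$d(x, P_Cx)^2 + d(P_Cx, P_Cy)^2 \;\leq\; d(x, P_Cy)^2,$$
and then, swapping the roles of $x$ and $y$ and using $P_Cx \in C$,
$$d(y, P_Cy)^2 + d(P_Cx, P_Cy)^2 \;\leq\; d(y, P_Cx)^2.$$
Adding these two inequalities produces exactly the required form, proving firm nonexpansiveness. The ``in particular it is nonexpansive'' clause then follows at once from Lemma~\ref{l:afne implies ne}\eqref{eq:one}.

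I do not anticipate any serious obstacle. The only delicate step is the bookkeeping at the outset: one must verify carefully that expanding $\Delta_{P_C}(x,y)$ through the quasi-linearization formula \eqref{eq:qlin} produces precisely the four cross-terms $d(x,P_Cx)^2$, $d(y,P_Cy)^2$, $d(x,P_Cy)^2$, $d(y,P_Cx)^2$ with the correct signs, so that the two applications of \eqref{eq:projectionsineq} match term-for-term after addition. Once that identification is made, the proof collapses to the one-line addition above.
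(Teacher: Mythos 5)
Your proposal is correct and coincides with the paper's own proof: the paper likewise applies \eqref{eq:projectionsineq} twice (with $P_Cy$ and $P_Cx$ as the points of $C$), adds the two inequalities, and reads off $d(P_Cx,P_Cy)^2\leqslant\Delta_{P_C}(x,y)$, then invokes Lemma~\ref{l:afne implies ne}\eqref{eq:one} for nonexpansiveness. Your expansion of $\Delta_{P_C}(x,y)$ via \eqref{eq:qlin} is also the right bookkeeping, so there is nothing to add.
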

\begin{proof}
 Let $x,y\in X$ be arbitrary. Denote by $P_Cx, P_Cy$ the metric projections onto $C$ of $x$ and $y$ respectively. They exist and are unique. Moreover utilizing inequality \eqref{eq:projectionsineq} we obtain
 \begin{align*}
  & d(x,P_Cx)^2+d(P_Cx,P_Cy)^2\leqslant d(x,P_Cy)^2\\
  & d(y,P_Cy)^2+d(P_Cy,P_Cx)^2\leqslant d(y,P_Cx)^2.
 \end{align*}
Adding both sides of the last two inequalities yields $d(P_Cx,P_Cy)^2\leqslant \Delta_{P_C}(x,y)$ and thus $P_C$ is firmly nonexpansive. By Lemma \ref{l:afne implies ne} \eqref{eq:one} $P_C$ is nonexpansive. This completes the proof. 
\end{proof}

\subsection{Compositions of quasi $\alpha$-firmly nonexpansive operators}
\begin{thm}[Compositions]\label{compositionsthm}
 Let $S$ be quasi $\alpha$-firmly nonexpansive  
 with constant $\alpha_S$, 
 let $T$ be quasi $\alpha$-firmly nonexpansive  
 with constant $\alpha_T$,  and let  
 $\Fix T\cap \Fix S\neq\emptyset$.   Then the operator 
 $TS$ is quasi $\alpha$-firmly nonexpansive  with constant $\alpha_{TS}$ given by \eqref{eq:alpha comp}.
\end{thm}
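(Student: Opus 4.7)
The plan is to reduce everything to the quasi version of the characterization \eqref{eq:avghadamard2} from Lemma \ref{l:afne implies ne}\eqref{eq:two} and avoid the heavier machinery of Lemma \ref{t:afne of compositions}. First I would note that $\Fix T \cap \Fix S \subseteq \Fix TS$ trivially, so $\Fix TS$ is nonempty. Then, fixing $x \in X$ and any $z \in \Fix T \cap \Fix S$, I would apply \eqref{eq:avghadamard2} twice: once for $T$ at the pair $(Sx, z)$ (valid since $z \in \Fix T$) to obtain
\begin{equation*}
 d(TSx, z)^2 \le d(Sx, z)^2 - \frac{1-\alpha_T}{\alpha_T}d(Sx, TSx)^2,
\end{equation*}
and once for $S$ at $(x, z)$ (valid since $z \in \Fix S$) to obtain
\begin{equation*}
 d(Sx, z)^2 \le d(x, z)^2 - \frac{1-\alpha_S}{\alpha_S}d(x, Sx)^2.
\end{equation*}
Chaining the two yields the preliminary estimate
\begin{equation*}
 d(TSx, z)^2 \le d(x, z)^2 - \frac{1-\alpha_S}{\alpha_S}d(x, Sx)^2 - \frac{1-\alpha_T}{\alpha_T}d(Sx, TSx)^2.
\end{equation*}

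Second, to bring this into the form $d(x,z)^2 - \frac{1-\alpha_{TS}}{\alpha_{TS}} d(x, TSx)^2$, I would combine the triangle inequality $d(x, TSx) \le d(x, Sx) + d(Sx, TSx)$ with the elementary bound $p a^2 + q b^2 \ge \frac{pq}{p+q}(a+b)^2$, valid for $p, q > 0$ and $a, b \ge 0$ and obtained in one line from $(pa - qb)^2 \ge 0$. Taking $p := (1-\alpha_S)/\alpha_S$, $q := (1-\alpha_T)/\alpha_T$, $a := d(x, Sx)$ and $b := d(Sx, TSx)$, the only remaining task is to check $pq/(p+q) = (1 - \alpha_{TS})/\alpha_{TS}$, which is a direct computation using $1 - \alpha_{TS} = (1-\alpha_S)(1-\alpha_T)/(1 - \alpha_S\alpha_T)$ — itself immediate from \eqref{eq:alpha comp}. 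This produces the quasi $\alpha$-firmly nonexpansive inequality for $TS$ with constant $\alpha_{TS}$ at every $z \in \Fix T \cap \Fix S$.

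Third, to match the definition of quasi $\alpha$-firmly nonexpansiveness as stated, which requires the inequality for every $z \in \Fix TS$, I would establish the reverse inclusion $\Fix TS \subseteq \Fix T \cap \Fix S$ as a quasi analogue of Lemma \ref{l:intersections}. Applying the two single-operator estimates to a candidate $x \in \Fix TS$ (so $TSx = x$) together with some $z \in \Fix T \cap \Fix S$, chaining gives $0 \le -\bigl(\tfrac{1-\alpha_S}{\alpha_S} + \tfrac{1-\alpha_T}{\alpha_T}\bigr)d(x, Sx)^2$, which forces $Sx = x$ and then $Tx = TSx = x$, so $x \in \Fix T \cap \Fix S$.

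The only real obstacle is bookkeeping for the averaging constant: verifying the algebraic identity $pq/(p+q) = (1-\alpha_{TS})/\alpha_{TS}$ and the convexity-type bound $pa^2 + qb^2 \ge \frac{pq}{p+q}(a+b)^2$. No $\CAT(0)$-specific input beyond Lemma \ref{l:afne implies ne}\eqref{eq:two} and the triangle inequality enters the argument, so the proof should be considerably shorter than that of Lemma \ref{t:afne of compositions}, where the absence of a fixed point forces one to juggle the discrepancy terms $\Delta_T$ and $\Delta_S$ directly.
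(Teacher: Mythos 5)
Your proposal is correct and is, in substance, the paper's own argument: the paper routes the identical computation through Lemma \ref{t:afne of compositions} and Lemma \ref{l:intersections}, where at a fixed point $y$ the quantities $L$, $M$, $V$ reduce to $d(x,Sx)^2$, $d(Sx,TSx)^2$, $d(x,TSx)^2$, and the verification of \eqref{eq:avgineqH} is exactly your bound $pa^2+qb^2\geqslant\tfrac{pq}{p+q}(a+b)^2$ combined with the triangle inequality $d(x,TSx)\leqslant d(x,Sx)+d(Sx,TSx)$. Your inlined version merely skips the discrepancy bookkeeping and re-derives $\Fix TS=\Fix T\cap\Fix S$ directly (correctly, since $TSx=x$ makes $d(Sx,TSx)=d(x,Sx)$), so nothing is missing.
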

\begin{proof}
 By Lemma \ref{l:intersections}, it suffices to show that 
 inequality \eqref{eq:avgineqH} holds at all points  $y\in \Fix TS$. Note assumption $\Fix T\cap\Fix S\neq\emptyset$ implies by Lemma \ref{l:intersections} that $\Fix TS=\Fix T\cap\Fix S$. Then for $y\in \Fix TS$, we have  
 $L(x,y)=d(x,Sx)^2, M(x,y)=d(Sx,TSx)^2$ and $2U(x,y)=d(x,Sx)^2+d(Sx,TSx)^2-d(x,TSx)^2$, 
 where $L, M$ and $U$ are defined in \eqref{eq:LMUV}. 
Then  from \eqref{eq:avgineqH} it suffices to show that
 \begin{align*}
  \Big(\frac{1-\alpha_{S}}{\tau\alpha_{S}}\Big)^2d(x,Sx)^2&
  +\Big(\frac{1-\alpha_{T}}{\tau\alpha_{T}}\Big)^2d(Sx,TSx)^2\\
  &+\Big(\frac{1-\alpha_{S}}{\tau\alpha_{S}}\Big)\Big(\frac{1-\alpha_{T}}{\tau\alpha_{T}}\Big)\Big(d(x,Sx)^2
  +d(Sx,TSx)^2-d(x,TSx)^2\Big)\geqslant 0
 \end{align*}
for all $x\in H$, where $\tau$ is given by \eqref{eq:tau}. If we let
$\kappa:=\frac{1-\alpha_{S}}{\alpha_{S}}/\frac{1-\alpha_{T}}{\alpha_{T}}$
then it is equivalent to prove that
$$(\kappa+1)d(x,Sx)^2+\frac{\kappa+1}{\kappa}d(Sx,TSx)^2-d(x,TSx)^2\geqslant 0$$
for all $x\in H$. On the other hand we have the elementary inequality
$$\kappa d(x,Sx)^2+\frac{1}{\kappa}d(Sx,TSx)^2\geqslant 2d(x,Sx)d(Sx,TSx),
\qquad\forall\kappa>0$$
which together with the triangle inequality $d(x,Sx)+d(Sx,TSx)\geqslant d(x,TSx)$
imply
\begin{align*}
 (\kappa+1)d(x,Sx)^2+\frac{\kappa+1}{\kappa}d(Sx,TSx)^2
 \geqslant (d(x,Sx)+d(Sx,TSx))^2\geqslant d(x,TSx)^2, 
\end{align*}
for all $x\in X$, which completes the proof.  
\end{proof}

\begin{corollary}
 Let $(X,d)$ be a $\CAT(0)$ space and $(T_i)_{i=1}^N$ be a finite family of quasi $\alpha$-firmly nonexpansive operators with constants $\alpha_i\in(0,1)$. Then the operator $T:=T_{i_n}T_{i_{n-1}}...T_{i_1}$ where $i_j\in\{1,2,...,n\}$ are distinct is also quasi $\alpha$-firmly nonexpansive with some constant $\alpha\in(0,1)$. Moreover $\alpha$ is a function of $\alpha_i$-s and is determined recursively by the rule \eqref{eq:alpha comp}. 
\end{corollary}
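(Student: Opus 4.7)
The plan is to argue by induction on the length of the composition, using Theorem \ref{compositionsthm} as the engine and Lemma \ref{l:intersections} to propagate nonemptiness of the common fixed point set. The implicit hypothesis is of course $\bigcap_{j=1}^{N}\Fix T_{i_j}\neq\emptyset$, since otherwise ``quasi'' would be vacuous for the composition. Denote $\alpha_j:=\alpha_{i_j}$ and set $S_k:=T_{i_k}T_{i_{k-1}}\cdots T_{i_1}$ for $k=1,\ldots,N$.

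The base case $k=1$ is immediate since $S_1=T_{i_1}$ is quasi $\alpha$-firmly nonexpansive with constant $\alpha_1$. For the inductive step, assume $S_{k-1}$ is quasi $\alpha$-firmly nonexpansive with a constant $\beta_{k-1}\in(0,1)$ obtained recursively from $\alpha_1,\ldots,\alpha_{k-1}$ via \eqref{eq:alpha comp}, and assume additionally that $\Fix S_{k-1}=\bigcap_{j=1}^{k-1}\Fix T_{i_j}$. Writing $S_k=T_{i_k}S_{k-1}$, the latter identity gives
\[
\Fix T_{i_k}\cap \Fix S_{k-1}=\bigcap_{j=1}^{k}\Fix T_{i_j}\neq \emptyset,
\]
so Theorem \ref{compositionsthm} applies to the pair $(S_{k-1},T_{i_k})$. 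It yields that $S_k$ is quasi $\alpha$-firmly nonexpansive with constant
\[
\beta_k=\frac{\beta_{k-1}+\alpha_k-2\beta_{k-1}\alpha_k}{1-\beta_{k-1}\alpha_k},
\]
exactly the recursion \eqref{eq:alpha comp}. A second application of Lemma \ref{l:intersections}, now to $(S_{k-1},T_{i_k})$, delivers $\Fix S_k=\Fix T_{i_k}\cap \Fix S_{k-1}=\bigcap_{j=1}^{k}\Fix T_{i_j}$, closing the induction on the fixed-point identity needed at the next step.

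It remains to check that $\beta_k\in(0,1)$ whenever $\beta_{k-1},\alpha_k\in(0,1)$, so that the class membership is preserved. Writing the numerator as $\beta_{k-1}(1-\alpha_k)+\alpha_k(1-\beta_{k-1})$ shows $\beta_k>0$, while a short computation gives
\[
1-\beta_k=\frac{(1-\beta_{k-1})(1-\alpha_k)}{1-\beta_{k-1}\alpha_k}>0,
\]
so $\beta_k\in(0,1)$. Iterating $N-1$ times starting from $\beta_1=\alpha_1$ gives the claimed constant $\alpha:=\beta_N\in(0,1)$, and by construction $\alpha$ is a function of $\alpha_1,\ldots,\alpha_N$ determined by the recursion \eqref{eq:alpha comp}.

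The argument is essentially routine once Theorem \ref{compositionsthm} is available; the only nontrivial bookkeeping is the parallel induction on $\Fix S_k$, which is exactly what Lemma \ref{l:intersections} delivers and what the composition theorem's hypothesis $\Fix T\cap \Fix S\neq \emptyset$ demands at each stage. No genuine obstacle is anticipated.
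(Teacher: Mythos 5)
Your proof is correct and is exactly the argument the paper intends: the corollary is stated without proof as an immediate iteration of Theorem \ref{compositionsthm}, with Lemma \ref{l:intersections} propagating $\Fix S_k=\bigcap_{j\leqslant k}\Fix T_{i_j}$ so the nonemptiness hypothesis is available at each stage. Your explicit verification that the recursion \eqref{eq:alpha comp} keeps the constant in $(0,1)$, via $1-\beta_k=(1-\beta_{k-1})(1-\alpha_k)/(1-\beta_{k-1}\alpha_k)$, is a worthwhile detail the paper leaves implicit.
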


\subsection{Convex combinations}
\subsubsection{Convex combinations of elements}
Let $(X,d)$ be a $\CAT(0)$ space. Convex combination for two elements $x,y\in X$ is well defined in terms of the geodesic connecting $x$ with $y$. The lack in general of an additive structure in $X$ makes the concept of \textit{convex combination} of more than two elements somewhat ambiguous. One way to define convex combinations of more than two elements would be the following. To keep arguments simple say we are given three points $x,y,z\in X$ and three numbers $w_1,w_2,w_3\in[0,1]$ such that $w_1+w_2+w_3=1$. Then the expressions
\begin{align*}
 &(w_1+w_2)\Big(\frac{w_1}{w_1+w_2}x\oplus\frac{w_2}{w_1+w_2}y\Big)\oplus w_3z\\&
 (w_2+w_3)\Big(\frac{w_2}{w_2+w_3}y\oplus\frac{w_3}{w_2+w_3}z\Big)\oplus w_1x\\&
 (w_1+w_3)\Big(\frac{w_1}{w_1+w_3}x\oplus\frac{w_3}{w_1+w_3}z\Big)\oplus w_2y
\end{align*}
seem all reasonable choices for a convex combination of $x,y,z$. But they are not guaranteed to be equal unless $X$. Notice that by construction all three expressions are in the convex hull $\co\{x,y,z\}$. Now which one to choose is not obvious. However there is way to define convex combinations uniquely based on the \textit{barycenter method}. To be more precise let $x_1,x_2,...,x_n\in X$ and $w_1,w_2,...,w_n\in[0,1]$ such that $\sum_iw_i=1$. The convex combination $x^*$ is the solution to the minimization problem
\begin{equation}
 \label{eq:cvxcombo}
 \min_{x\in X}F(x)=\sum_{i=1}^nw_id(x,x_i)^2.
\end{equation}
The existence and uniqueness of $x^*$ follows from \eqref{eq:cvxcombo} being strongly convex. We denote 
\begin{equation}
 \label{eq:cvxcombo1}
 x^*:=w_1x_1\oplus w_2x_2\oplus...\oplus w_nx_n.
\end{equation}

\subsubsection{Convex combinations of operators}
Having defined convex combinations of an arbitrary finite set of elements in $X$ then it is easy to define the convex combinations of operators. Let $T_1,T_2,...,T_n:X\to X$ be a family of operators. An operator $T:X\to X$ is said to be a \textit{convex combination} of $T_1,T_2,...,T_n$ for a given set of weights $w_1,w_2,...,w_n\in[0,1], \sum_iw_i=1$ if 
\begin{equation}
 \label{eq:cvxcombooperators}
 Tx=\arg\min_{y\in X}\sum_{i=1}^nw_id(y,T_ix)^2
\end{equation}
and we denote it by 
\begin{equation}
 \label{eq:cvxcombooperators1}
 T:=w_1T_1\oplus w_2T_2\oplus...\oplus w_nT_n.
\end{equation}

\begin{lemma}
\label{keylemmacvx}
 Let $T_i$ be quasi $\alpha$-firmly nonexpansive with $\alpha_i\in(0,1)$ for $i=1,2,...,n$ and $T$ be given by \eqref{eq:cvxcombooperators1} then $\bigcap_i\Fix T_i=\Fix T$ whenever $\bigcap_i\Fix T_i\neq\emptyset$.
\end{lemma}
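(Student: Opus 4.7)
The forward inclusion $\bigcap_i \Fix T_i \subseteq \Fix T$ is immediate: if $x$ is a common fixed point, then $T_i x = x$ for every $i$, so the objective in \eqref{eq:cvxcombooperators} becomes $\sum_i w_i d(y,x)^2 = d(y,x)^2$, which (being strongly convex with parameter $2$) is uniquely minimized at $y=x$. Hence $Tx=x$.

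For the reverse inclusion $\Fix T \subseteq \bigcap_i \Fix T_i$, let $x \in \Fix T$ and fix a reference point $z \in \bigcap_i \Fix T_i$, which exists by hypothesis. The plan is to exploit the strong convexity identity from Proposition \ref{prop1} applied to the objective
\begin{equation*}
F(y) := \sum_{i=1}^n w_i \, d(y, T_i x)^2.
\end{equation*}
Since each $d(\cdot, T_i x)^2$ is strongly convex with parameter $2$ and the weights sum to one, $F$ is strongly convex with parameter $\mu=2$. Because $x = Tx$ is its (unique) minimizer, Proposition \ref{prop1} gives
\begin{equation*}
F(x) + d(x,z)^2 \;\leq\; F(z).
\end{equation*}

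The next step is to bound $F(z)$ from above using quasi $\alpha$-firm nonexpansiveness of each $T_i$ at the fixed point $z$. By Lemma \ref{l:afne implies ne}\eqref{eq:two} applied to $T_i$,
\begin{equation*}
d(T_i x, z)^2 \;\leq\; d(x,z)^2 - \frac{1-\alpha_i}{\alpha_i}\, d(x, T_i x)^2,\qquad i=1,\dots,n.
\end{equation*}
Taking the convex combination with weights $w_i$ gives
\begin{equation*}
F(z) \;=\; \sum_i w_i\, d(T_i x, z)^2 \;\leq\; d(x,z)^2 - \sum_i w_i \,\frac{1-\alpha_i}{\alpha_i}\, d(x, T_i x)^2.
\end{equation*}

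Chaining the two inequalities and noting $F(x) = \sum_i w_i\, d(x, T_i x)^2$, one obtains
\begin{equation*}
\sum_i w_i\, d(x, T_i x)^2 \;\leq\; -\sum_i w_i\, \frac{1-\alpha_i}{\alpha_i}\, d(x, T_i x)^2,
\end{equation*}
i.e.\ $\sum_i \frac{w_i}{\alpha_i}\, d(x, T_i x)^2 \leq 0$. Since all summands are nonnegative and the weights $w_i$ (implicitly positive in the definition of the convex combination) and constants $\alpha_i$ are positive, this forces $d(x, T_i x)=0$, hence $T_i x = x$ for each $i$. Therefore $x \in \bigcap_i \Fix T_i$, completing the proof. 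The main subtlety is simply identifying the correct strong-convexity parameter of $F$ and linking Proposition \ref{prop1} at the minimizer $x=Tx$ with the quasi $\alpha$-firm nonexpansiveness estimates at $z$; once these two ingredients are combined, the conclusion is forced algebraically.
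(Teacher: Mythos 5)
Your proof is correct and follows essentially the same route as the paper's: the forward inclusion by direct computation of the minimizer, and the reverse inclusion by combining the strong-convexity inequality of Proposition \ref{prop1} at the minimizer $x=Tx$ with the aggregated quasi $\alpha$-firm nonexpansiveness estimates at a common fixed point. Your remark that the weights must be (implicitly) positive to conclude $T_ix=x$ for every $i$ is a point the paper glosses over, but otherwise the two arguments coincide.
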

\begin{proof}
 Assume $\bigcap_i\Fix T_i\neq\emptyset$ and let $x\in\bigcap_i\Fix T_i$ then by \eqref{eq:cvxcombooperators} follows
 $$Tx=\arg\min_{y\in X}\sum_{i=1}^nw_id(y,T_ix)^2=\arg\min_{y\in H}\sum_{i=1}^nw_id(y,x)^2=\arg\min_{y\in H}d(y,x)^2=x.$$
This shows $\bigcap_i\Fix T_i\subseteq\Fix T$. Now let $x\in \Fix T$ and $y\in \bigcap_i\Fix T_i$. Note that $d(\cdot,T_ix)^2$ is strongly convex with parameter $\mu=2$ for every $i=1,2,...,n$. Therefore the functional $F(\cdot):=\sum_iw_id(\cdot,T_ix)^2$ is strongly convex as a finite sum of strongly convex functions with parameter $\mu=\sum_i2w_i=2$. By virtue of Proposition \ref{prop1} the following inequality holds
\begin{equation}
 \label{eq:minimumineq}
 F(Tx)+d(Tx,y)^2\leqslant F(y),\;\forall y\in X.
\end{equation}
In particular since $x\in\Fix T$ then 
\begin{equation}
 \label{eq:minimumineq1}
 F(x)+d(x,y)^2\leqslant F(y),\;\forall y\in X.
\end{equation}
By assumption $T_i$ is quasi $\alpha$-firmly nonexpansive for all $i=1,2,...,n$. Then Lemma \ref{l:afne implies ne} \eqref{eq:one} implies
$$d(T_ix,y)^2\leqslant d(x,y)^2-\frac{1-\alpha_i}{\alpha_i}d(x,T_ix)^2,\;\forall x\in X, \forall y\in\Fix T_i.$$
Therefore in aggregate we obtain
$$\sum_{i=1}^nw_id(T_ix,y)^2\leqslant d(x,y)^2-\sum_{i=1}^nw_i\frac{1-\alpha_i}{\alpha_i}d(x,T_ix)^2,\;\forall x\in X,\forall y\in\bigcap_i\Fix T_i.$$
By inequality \eqref{eq:minimumineq1} and definition of $F$ we get
$$0\leqslant F(x)\leqslant-\sum_{i=1}^nw_i\frac{1-\alpha_i}{\alpha_i}d(x,T_ix)^2\leqslant 0,\;\forall x\in \Fix T.$$
Therefore $T_ix=x$ for all $i$ implies $\Fix T\subseteq\bigcap_i\Fix T_i$.  
\end{proof}

\begin{remark}
 Note that the above lemma still holds under slightly milder condition that the operator $T_i$ be \textit{strictly quasi nonexpansive} for every $i=1,2,...,n$, i.e.
 \begin{equation}
  \label{eq:quasine}
  d(T_ix,y)<d(x,y),\hspace{0.2cm}\forall y\in\Fix T_i, \forall x\in X\setminus\Fix T_i.
 \end{equation}
 Similarly for the operator $S$ in Lemma \ref{l:intersections} the condition can be relaxed to just strictly quasi nonexpansive.
\end{remark}

Suppose that $T_i$ is quasi $\alpha$-firmly nonexpansive with some constant $\alpha_i\in(0,1)$ for $i=1,2,...,n$. We would like to know under what conditions is $T=w_1T_1\oplus...\oplus w_nT_n$ a quasi $\alpha$-firmly nonexpansive operator. 

\begin{thm}[Convex combinations]
 \label{cxvcombooperator}
 Let $(X,d)$ be a $\CAT(0)$ space and $T_i:X\to X$ be a family of quasi $\alpha$-firmly nonexpansive operators with parameters $\alpha_i$ for $i=1,2,...,n$. If $\bigcap_i\Fix T_i\neq\emptyset$ then the operator $T:T\to T$ defined as in \eqref{eq:cvxcombooperators1} is quasi $\alpha$-firmly nonexpansive with parameter $\alpha=\max_i\{\alpha_i\}$.
\end{thm}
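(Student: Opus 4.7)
The plan is to reduce the theorem to the equivalent characterization of quasi $\alpha$-firm nonexpansiveness in Lemma \ref{l:afne implies ne}\eqref{eq:two}. First, Lemma \ref{keylemmacvx} immediately gives $\Fix T=\bigcap_i\Fix T_i\neq\emptyset$, so it suffices to fix $x\in X$ and $y\in\Fix T$ and to prove
$$d(Tx,y)^2\leqslant d(x,y)^2-\frac{1-\alpha}{\alpha}d(x,Tx)^2,\qquad \alpha=\max_i\alpha_i.$$

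The engine of the argument will be the strong convexity of the functional $F(z):=\sum_i w_i d(z,T_ix)^2$, which has parameter $\mu=2$ (as a convex combination of squared distances) and whose unique minimizer is precisely $Tx$ by definition \eqref{eq:cvxcombooperators1}. Applying the fundamental inequality \eqref{eq:strcvxidentity} from Proposition \ref{prop1} once at $z=y$ and once at $z=x$ will yield the two key estimates
$$\sum_i w_i d(Tx,T_ix)^2+d(Tx,y)^2\leqslant \sum_i w_i d(y,T_ix)^2,$$
$$\sum_i w_i d(Tx,T_ix)^2+d(x,Tx)^2\leqslant \sum_i w_i d(x,T_ix)^2.$$

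Next I would feed the quasi $\alpha$-firm nonexpansiveness of each $T_i$ (in the form given by Lemma \ref{l:afne implies ne}\eqref{eq:two}) into the first estimate to upper-bound $\sum_i w_i d(y,T_ix)^2$ by $d(x,y)^2-\sum_i w_i\tfrac{1-\alpha_i}{\alpha_i}d(x,T_ix)^2$. The choice $\alpha=\max_i\alpha_i$ gives $\tfrac{1-\alpha_i}{\alpha_i}\geqslant\tfrac{1-\alpha}{\alpha}$ for every $i$, so the sum is dominated by $\tfrac{1-\alpha}{\alpha}\sum_i w_i d(x,T_ix)^2$ (with the correct sign after the minus). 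Combining these pieces produces
$$d(Tx,y)^2\leqslant d(x,y)^2-\frac{1-\alpha}{\alpha}\sum_i w_i d(x,T_ix)^2-\sum_i w_i d(Tx,T_ix)^2.$$

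Finally, the second strong-convexity estimate lets me relate the $\sum_i w_i d(x,T_ix)^2$ term back to $d(x,Tx)^2$: it gives $\sum_i w_i d(x,T_ix)^2\geqslant d(x,Tx)^2+\sum_i w_i d(Tx,T_ix)^2$, and substituting this yields
$$d(Tx,y)^2\leqslant d(x,y)^2-\frac{1-\alpha}{\alpha}d(x,Tx)^2-\frac{1}{\alpha}\sum_i w_i d(Tx,T_ix)^2,$$
which is stronger than required since the last term is nonnegative. The only subtle point — and the one I expect could trip up a quick attempt — is keeping the inequality directions consistent when simultaneously exploiting strong convexity at the two anchor points $y$ and $x$; the rest is bookkeeping that works because $\alpha=\max_i\alpha_i$ is precisely the value that makes $\tfrac{1-\alpha}{\alpha}$ a uniform lower bound for the $\tfrac{1-\alpha_i}{\alpha_i}$.
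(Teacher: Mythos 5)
Your proposal is correct and follows essentially the same route as the paper's proof: both rest on Lemma \ref{keylemmacvx} for the identification $\Fix T=\bigcap_i\Fix T_i$, the variational inequality \eqref{eq:strcvxidentity} for the minimizer $Tx$ of $F$ evaluated at $y$ and at $x$, the quasi $\alpha$-firm estimate for each $T_i$, and the monotonicity of $t\mapsto(1-t)/t$ to justify $\alpha=\max_i\alpha_i$. The only difference is that you retain the nonnegative term $\sum_i w_i d(Tx,T_ix)^2$ that the paper discards, which yields a marginally sharper final inequality but is not a different argument.
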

\begin{proof}
  Inequality \eqref{eq:minimumineq} implies $d(Tx,y)^2\leqslant F(y)$ for all $y\in H$ . By definition of the functional $F$  and assumption that $T_i$ is quasi $\alpha$-firmly nonexpansive for all $i=1,2,...,n$ by using Lemma  \ref{l:afne implies ne} \eqref{eq:one} we get  
 \begin{equation}
  \label{eq:Txineq}
  d(Tx,y)^2\leqslant d(x,y)^2-\sum_{i=1}^nw_i\frac{1-\alpha_i}{\alpha_i}d(T_ix,x)^2,\;\forall y\in\bigcap_i\Fix T_i.
 \end{equation}
This inequality is meaningful since by assumption $\bigcap_i\Fix T_i\neq\emptyset$. Moreover by Lemma \ref{keylemmacvx} we have $\Fix T=\bigcap_i\Fix T_i$. Therefore \eqref{eq:Txineq} is equivalent to
 \begin{equation}
  \label{eq:Txineq1}
  d(Tx,y)^2\leqslant d(x,y)^2-\sum_{i=1}^nw_i\frac{1-\alpha_i}{\alpha_i}d(T_ix,x)^2,\;\forall y\in\Fix T
 \end{equation}
 From \eqref{eq:Txineq1} we also get 
 \begin{equation}
  \label{eq:Txineq2}
  d(Tx,y)^2\leqslant d(x,y)^2-\frac{1-\alpha}{\alpha}\sum_{i=1}^nw_id(T_ix,x)^2,\;\forall y\in\Fix T
 \end{equation}
 where $\alpha:=\max_i\{\alpha_i\}$. By definition of $F$ this is the same as 
 \begin{equation}
  \label{eq:Txineq3}
  d(Tx,y)^2\leqslant d(x,y)^2-\frac{1-\alpha}{\alpha}F(x),\;\forall y\in\Fix T
 \end{equation}
 In \eqref{eq:minimumineq} we have in particular $d(Tx,x)^2\leqslant F(x)$. Therefore 
\begin{equation}
  \label{eq:Txineq4}
  d(Tx,y)^2\leqslant d(x,y)^2-\frac{1-\alpha}{\alpha}d(Tx,x)^2,\;\forall y\in\Fix T
 \end{equation} 
\end{proof}

% \section{$\Delta_T$-convergence and $\Delta_T$-topology}
% \label{s:T-topology}
% 
% \subsection{$\Delta_T$-weak convergence} Let $(X,d)$ be a $\CAT(0)$ space. Given a continuous operator $T:X\to X$, a sequence $(x_n)_{n\in\mathbb{N}}\subseteq X$ is said to $\Delta_T$- converge to an element $x\in X$, and we denote it by $x_n\overset{\Delta_T}\to x$ if and only if $\lim_n\Delta_T(x_n,y)=\Delta_T(x,y)$ for every $y\in X$. It follows by continuity of the metric that if $x_n\to x$ then
% $x_n\overset{\Delta_T}\to x$ for any continuous operator $T:X\to X$. In particular when $T=\Id$ then $\Delta_T$-convergence coincides with the usual strong convergence.

\section{Cyclic and averaged projections}
\label{s:projection}

\subsection{A general theorem}
Let $(X,d)$ be a complete $\CAT(0)$ space and $T:X\to X$ be $\alpha$-firmly nonexpansive with some constant $\alpha\in(0,1)$. It is of interest to know of the convergence, both weak and strong, of the iterates $x_n:=Tx_{n-1}$ for $n\in\mathbb{N}$. In this subsection we establish a general convergence result.

Given a set $S\subseteq X$ and a sequence $(x_n)_{n\in\mathbb{N}}$ in $X$ we say $(x_n)_{n\in\mathbb{N}}$ is {\em Fej\'er monotone with respect to $S$} whenever
\begin{equation}
 d(x_n,y)\leqslant d(x_{n-1},y),\qquad \forall y\in S.
\end{equation}

\begin{lemma}
 \label{l:fejer}
 Let $T:X\to X$ be quasi $\alpha$-firmly nonexpansive. Then $x_n:=Tx_{n-1}$ for $n\in \mathbb{N}$ is Fej\'er monotone with respect to $\Fix T$. In particular $(x_n)_{n\in\mathbb{N}}$ is bounded.
\end{lemma}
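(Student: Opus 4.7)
The plan is to invoke the key inequality for quasi $\alpha$-firmly nonexpansive operators, namely Lemma \ref{l:afne implies ne}\eqref{eq:two}, which rewrites the defining property as
\[
 d(Tx,y)^2 \leqslant d(x,y)^2 - \frac{1-\alpha}{\alpha}\, d(x,Tx)^2, \qquad \forall x\in X,\ \forall y\in \Fix T.
\]
Setting $x:=x_{n-1}$ and using $x_n = Tx_{n-1}$ immediately yields, for every $y\in\Fix T$,
\[
 d(x_n,y)^2 \leqslant d(x_{n-1},y)^2 - \frac{1-\alpha}{\alpha}\, d(x_{n-1},x_n)^2 \leqslant d(x_{n-1},y)^2,
\]
since $\alpha\in(0,1)$ forces $(1-\alpha)/\alpha\geqslant 0$ and $d(x_{n-1},x_n)^2\geqslant 0$. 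Taking square roots is then exactly the Fej\'er monotonicity inequality with respect to $\Fix T$.

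For the boundedness claim, I would simply note that $\Fix T\neq\emptyset$ by definition of (quasi) $\alpha$-firm nonexpansiveness, so one may fix any $y_*\in\Fix T$. Iterating the Fej\'er inequality gives $d(x_n,y_*)\leqslant d(x_{n-1},y_*)\leqslant\cdots\leqslant d(x_0,y_*)$ for all $n\in\mathbb{N}$, so the sequence lies in the closed ball of radius $d(x_0,y_*)$ centered at $y_*$ and hence is bounded.

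There is essentially no obstacle here; the lemma is a direct specialization of Lemma \ref{l:afne implies ne}\eqref{eq:two} along an orbit. The only small point worth stating explicitly is that $\Fix T$ is nonempty (built into the definition of quasi nonexpansiveness), which is what makes the Fej\'er inequality meaningful and ensures the boundedness conclusion is nontrivial.
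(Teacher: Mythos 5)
Your proof is correct and follows essentially the same route as the paper: both invoke Lemma \ref{l:afne implies ne}\eqref{eq:two} to get $d(x_n,y)\leqslant d(x_{n-1},y)$ for $y\in\Fix T$ and then fix one $y_*\in\Fix T$ to conclude boundedness. The only cosmetic difference is that you carry the extra term $\frac{1-\alpha}{\alpha}d(x_{n-1},x_n)^2$ before discarding it, whereas the paper passes directly through quasi nonexpansiveness.
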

\begin{proof}
By Lemma \ref{l:afne implies ne} \eqref{eq:two} the operator $T$ is quasi nonexpansive. Therefore for all $y\in\Fix T$ we obtain
$$d(x_{n},y)=d(Tx_{n-1},y)\leqslant d(x_{n-1},y),\;\forall n\in\mathbb{N}.$$
Thus $(x_n)_{n\in\mathbb{N}}$ is Fej\'er monotone with respect to $\Fix T$. An iterative application of the last inequality yields in particular $d(x_n,y)\leqslant d(x_0,y)$ for all $n\in\mathbb{N}$ and some $y\in \Fix T\subseteq X$. Hence $\sup_nd(x_n,y)\leqslant d(x_0,y)<+\infty$ implies
$(x_n)_{n\in\mathbb{N}}$ is bounded. 
\end{proof}

Given a sequence $(x_n)_{n\in\mathbb{N}}$ we say that $x\in X$ is a {\em weak cluster point} of $(x_n)_{n\in\mathbb{N}}$ if there exists some subsequence $(x_{n_k})_{k\in\mathbb{N}}$ of $(x_n)_{n\in\mathbb{N}}$ such that $x_{n_k}\overset{w}\to x$.

\begin{lemma}
\label{l:weakcluster}
 Let $(X,d)$ be a complete $\CAT(0)$ space and $C\subseteq X$ a closed convex set. Let $(x_n)_{n\in\mathbb{N}}$ be Fej\'er monotone with respect to $C$. Then $x_n\overset{w}\to x$ for some $x\in C$ if and only if all  weak cluster points of $(x_n)_{n\in\mathbb{N}}$ belong to $C$. In particular this weak limit and weak cluster points all coincide. 
\end{lemma}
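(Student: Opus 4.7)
The forward direction is immediate: if $x_n\overset{w}\to x$ with $x\in C$, every subsequence also converges weakly to $x$, so every weak cluster point equals $x$ and hence lies in $C$. For the converse, my strategy is to combine Fej\'er monotonicity (which produces convergent distance sequences) with strong convexity of $d(\cdot,z)^2$ applied at a midpoint to force any two weak cluster points in $C$ to coincide, and then promote this uniqueness to weak convergence of the entire sequence.

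Fej\'er monotonicity implies that for each $y\in C$ the scalar sequence $(d(x_n,y))_{n\in\mathbb{N}}$ is nonincreasing and bounded below, hence convergent; in particular $(x_n)$ is bounded. Since $X$ is a complete $\CAT(0)$ space and weak convergence coincides with $\Delta$-convergence on bounded sets, the $\Delta$-compactness of bounded sequences due to Kirk and Panyanak guarantees the existence of at least one weak cluster point. Suppose $u,v\in C$ are two weak cluster points, realized as $\Delta$-limits of subsequences $x_{n_k}\overset{\Delta}\to u$ and $x_{m_j}\overset{\Delta}\to v$, and set $r_u:=\lim_n d(x_n,u)$, $r_v:=\lim_n d(x_n,v)$. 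The asymptotic center characterization of $\Delta$-limits gives $\limsup_k d(x_{n_k},u)\leqslant \limsup_k d(x_{n_k},v)$, and since the full limits along $(x_n)$ exist, this reduces to $r_u\leqslant r_v$; by symmetry $r_u=r_v=:r$.

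The crux is to apply the quadratic inequality \eqref{eq:quadratic} at the midpoint $m:=\tfrac{1}{2}u\oplus \tfrac{1}{2}v$, which lies in $C$ by convexity, and at the point $x_n$:
\begin{equation*}
d(x_n,m)^2 \leqslant \tfrac{1}{2}d(x_n,u)^2+\tfrac{1}{2}d(x_n,v)^2-\tfrac{1}{4}d(u,v)^2.
\end{equation*}
Taking the limit yields $\lim_n d(x_n,m)^2 \leqslant r^2-\tfrac{1}{4}d(u,v)^2$, while the asymptotic center property of $u$ for $(x_{n_k})$ supplies $r^2=\limsup_k d(x_{n_k},u)^2\leqslant \limsup_k d(x_{n_k},m)^2\leqslant \lim_n d(x_n,m)^2$. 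Combining these two bounds forces $d(u,v)=0$, so there is a single weak cluster point $x\in C$.

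To promote uniqueness of the weak cluster point to $x_n\overset{w}\to x$, I would argue by contradiction: otherwise there would be some $\gamma\in \Gamma_x(X)$, an $\epsilon>0$, and a subsequence $(x_{n_k})$ with $d(x,P_\gamma x_{n_k})\geqslant \epsilon$; extracting a $\Delta$-convergent sub-subsequence yields a weak cluster point of $(x_n)$, necessarily equal to $x$, which contradicts the lower bound $d(x,P_\gamma x_{n_{k_j}})\geqslant \epsilon$. The main subtlety I anticipate is the bookkeeping needed to pass between the paper's geodesic-projection definition of weak convergence and the $\Delta$-convergence toolkit used in the core argument, but the stated coincidence on bounded sets bridges the two cleanly.
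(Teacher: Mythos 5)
Your argument is correct, but it takes a genuinely different route from the paper. The paper's proof is essentially a citation: it observes that Fej\'er monotonicity gives boundedness (Lemma \ref{l:fejer}), invokes the coincidence of weak and $\Delta$-convergence on bounded sets \cite[Proposition 3.1.3]{Bacak}, and then refers the whole statement to \cite[Proposition 3.2.6 (iii)]{Bacak}. You instead reprove that proposition from scratch: existence of a weak cluster point via $\Delta$-compactness of bounded sequences (Kirk--Panyanak), convergence of the scalar sequences $d(x_n,y)$ for $y\in C$ from Fej\'er monotonicity, uniqueness of cluster points in $C$ by applying the quadratic inequality \eqref{eq:quadratic} at the midpoint $m=\tfrac12 u\oplus\tfrac12 v\in C$ together with the asymptotic-center characterization of $\Delta$-limits, and finally promotion of the unique cluster point to a weak limit of the full sequence by subsequence extraction. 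All of these steps check out; in particular the chain $r^2\leqslant \lim_n d(x_n,m)^2\leqslant r^2-\tfrac14 d(u,v)^2$ correctly forces $u=v$, and the last contradiction argument correctly bridges the geodesic-projection definition of weak convergence with the $\Delta$-convergence machinery. What the paper's version buys is brevity and reliance on an established reference; what yours buys is self-containedness and transparency about exactly where Fej\'er monotonicity and the $\CAT(0)$ inequality are used --- which is valuable, since the cited proposition's proof is essentially the argument you wrote out.
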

\begin{proof}
 A Fej\'er monotone sequence is bounded by virtue of Lemma \ref{l:fejer}. Since weak and $\Delta$-convergence coincide on bounded sets \cite[Proposition 3.1.3]{Bacak} then the result follows immediately from \cite[Proposition 3.2.6 $(iii)$]{Bacak}.
\end{proof}

\begin{thm}
 \label{th:general}
 Let $(X,d)$ be a complete $\CAT(0)$ space and $T:X\to X$ a nonexpansive operator. If $T$ is quasi $\alpha$-firmly nonexpansive then the iterates $x_n:=Tx_{n-1}$ for $n\in\mathbb{N}$ converge weakly to some $x^*\in\Fix T$. Moreover $\lim_nd(P_{\Fix T}x_n,x^*)=0$ whenever
\begin{equation}
  \label{eq:technical2}
  \lim\sup_n(d(x_n,P_{\gamma}P_{\Fix T}x_n)-d(x_n,P_{\Fix T}x_n))=0,\qquad\forall\gamma\in\Gamma_{x^*}(\Fix T)
 \end{equation}
 where $\Gamma_{x^*}(\Fix T):=\{\gamma\in\Gamma_{x^*}(X)\hspace{0.1cm}|\hspace{0.1cm}\gamma\subset \Fix T\}$.
\end{thm}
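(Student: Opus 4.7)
I would split the argument into weak convergence of $(x_n)$ and Cauchy-ness plus limit identification for $(P_{\Fix T}x_n)$.

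\emph{Weak convergence.} By Lemma \ref{l:fejer}, $(x_n)$ is Fej\'er monotone with respect to $\Fix T$, hence bounded. Fixing any $\bar y \in \Fix T$ and applying Lemma \ref{l:afne implies ne} \eqref{eq:two} with $x=x_n$, $y=\bar y$ yields
\begin{equation*}
 d(x_{n+1},\bar y)^2 \leqslant d(x_n,\bar y)^2 - \frac{1-\alpha}{\alpha}\, d(x_n,x_{n+1})^2,
\end{equation*}
so a telescoping summation gives $\sum_n d(x_n,x_{n+1})^2 < +\infty$ and in particular $d(x_n,Tx_n) \to 0$. Next I would establish a $\CAT(0)$ demiclosedness principle: if a subsequence $(x_{n_k})$ $\Delta$-converges to some $\bar x$, then $\bar x$ is the asymptotic center of $(x_{n_k})$; using nonexpansiveness of $T$,
\begin{equation*}
 d(x_{n_k},T\bar x) \leqslant d(x_{n_k},Tx_{n_k}) + d(Tx_{n_k},T\bar x) \leqslant d(x_{n_k},Tx_{n_k}) + d(x_{n_k},\bar x),
\end{equation*}
so $\limsup_k d(x_{n_k},T\bar x) \leqslant \limsup_k d(x_{n_k},\bar x)$; by the asymptotic-center property of $\bar x$ the reverse inequality also holds, and uniqueness of the asymptotic center as the minimizer of the strongly convex function $\limsup_k d(\cdot,x_{n_k})^2$ (cf.\ Proposition \ref{prop1}) forces $T\bar x = \bar x$. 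Every weak cluster point of $(x_n)$ therefore lies in $\Fix T$, which is closed and convex by Proposition \ref{p:fixT}, and Lemma \ref{l:weakcluster} promotes this to a unique weak limit $x^* \in \Fix T$.

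\emph{Strong convergence of the projections.} Set $p_n := P_{\Fix T}x_n$. Fej\'er monotonicity yields $d(x_m,p_n) \leqslant d(x_n,p_n) = d(x_n,\Fix T)$ for $m \geqslant n$, and the projection inequality \eqref{eq:projectionsineq} at $x_m$ with $y=p_n \in \Fix T$ gives
\begin{equation*}
 d(p_m,p_n)^2 \leqslant d(x_m,p_n)^2 - d(x_m,p_m)^2 \leqslant d(x_n,\Fix T)^2 - d(x_m,\Fix T)^2.
\end{equation*}
Since $(d(x_n,\Fix T))_n$ is non-increasing (by Fej\'er applied to $p_{n-1}$) and bounded below, it converges, so $(p_n)$ is Cauchy and converges to some $p^* \in \Fix T$. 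To identify $p^* = x^*$ I would use \eqref{eq:technical2}: for $\gamma \in \Gamma_{x^*}(\Fix T)$, applying \eqref{eq:projectionsineq} at $x_n$ with $y = P_\gamma p_n \in \Fix T$ and factoring yields
\begin{equation*}
 d(p_n,P_\gamma p_n)^2 \leqslant \bigl(d(x_n,P_\gamma p_n)+d(x_n,p_n)\bigr)\bigl(d(x_n,P_\gamma p_n)-d(x_n,p_n)\bigr);
\end{equation*}
the first factor is bounded, the second has $\limsup = 0$ by \eqref{eq:technical2}, and so $d(p_n,P_\gamma p_n) \to 0$. Since $P_\gamma$ is nonexpansive (Proposition \ref{p:projectionfne}), passing to the limit gives $P_\gamma p^* = p^*$, i.e.\ $p^* \in \gamma$ for every $\gamma \in \Gamma_{x^*}(\Fix T)$. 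If $p^* \neq x^*$, choosing $z$ strictly between $x^*$ and $p^*$ on the (unique) geodesic $[x^*,p^*] \subset \Fix T$ produces a $\gamma \in \Gamma_{x^*}(\Fix T)$ not containing $p^*$, a contradiction. Hence $p^* = x^*$.

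\emph{Anticipated obstacle.} The demiclosedness step is the heart of the weak-convergence part and must be carried out via asymptotic centers, as no linear-duality shortcut is available in a $\CAT(0)$ space. A more delicate point is the role of \eqref{eq:technical2}: it pins $p^*$ down only along each admissible geodesic, and the final contradiction relies on both convexity of $\Fix T$ and uniqueness of geodesics in a $\CAT(0)$ space to produce a geodesic from $x^*$ in $\Fix T$ that avoids $p^*$.
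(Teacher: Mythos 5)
Your proof is correct, and the first half (Fej\'er monotonicity, asymptotic regularity $d(x_n,Tx_n)\to 0$, demiclosedness via uniqueness of asymptotic centers, then Lemma \ref{l:weakcluster}) is essentially identical to the paper's argument, differing only in that you obtain $d(x_n,Tx_n)\to 0$ by telescoping/summability rather than by convergence of the monotone sequence $(d(x_n,y))_n$. The Cauchy argument for $p_n:=P_{\Fix T}x_n$ is also the paper's. Where you genuinely diverge is the identification $p^*=x^*$: the paper fixes the single geodesic $\gamma$ from $x^*$ to $p^*$, applies \eqref{eq:projectionsineq} twice (once for $P_{\Fix T}$, once for $P_\gamma$), adds the two inequalities, and passes to the limit using the defining property of weak convergence, namely $P_\gamma x_n\to x^*$, to conclude $2d(p^*,x^*)^2\leqslant 0$. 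You instead apply \eqref{eq:projectionsineq} once per geodesic to get $d(p_n,P_\gamma p_n)\to 0$, deduce $p^*\in\gamma$ for \emph{every} $\gamma\in\Gamma_{x^*}(\Fix T)$ via continuity of $P_\gamma$, and then rule out $p^*\neq x^*$ by exhibiting a shorter geodesic in $\Fix T$ (using convexity of $\Fix T$ and unique geodesics) that misses $p^*$. Both uses of \eqref{eq:technical2} amount to the same factoring of a difference of squares; your route has the small advantage of not invoking the weak-convergence characterization $P_\gamma x_n\to x^*$ in this step, at the cost of needing the hypothesis over the whole family $\Gamma_{x^*}(\Fix T)$ rather than at one geodesic, which is exactly what the theorem assumes anyway.
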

\begin{proof}
 We follow the lines in the proof of \cite[Theorem 7.16]{Berdellima}. Let $T:X\to X$ be $\alpha$-firmly nonexpansive operator. By Lemma \ref{l:fejer} the iterates $x_n:=Tx_{n-1}$ for $n\in\mathbb{N}$ form a Fej\'er monotone sequence with respect to $\Fix T$ and in particular a bounded sequence. Hence there is a subsequence $(x_{n_k})_{k\in\mathbb{N}}$ of $(x_n)_{n\in\mathbb{N}}$ weakly converging to some $x^*\in X$. We claim that $x^*\in\Fix T$. Note that by quasi $\alpha$-firmly nonexpansiveness we have
 $$d(x_n,Tx_n)^2\leqslant \frac{\alpha}{1-\alpha}\Big(d(x_n,y)^2-d(Tx_n,y)^2\Big),\;\forall y\in\Fix T.$$
 Since $(x_n)_{n\in\mathbb{N}}$ is Fej\'er monotone then the sequence of nonnegative real numbers $(d(x_n,y))_{n\in\mathbb{N}}$ is bounded and monotone for every $y\in \Fix T$, therefore by Bolzano--Weierstrass theorem it converges to some real number $d(y)$ possibly depending on $y$. We then obtain 
 $$\lim_nd(x_n,Tx_n)^2\leqslant \frac{\alpha}{1-\alpha}\lim_n\Big(d(x_n,y)^2-d(Tx_n,y)^2\Big)= \frac{\alpha}{1-\alpha}\Big(d(y)^2-d(y)^2\Big)=0.$$
 On the other hand by triangle inequality and nonexpansiveness of $T$ we get
 $$\limsup_kd(Tx^*,x_{n_k})\leqslant \limsup_kd(Tx^*,Tx_{n_k})+\limsup_kd(Tx_{n_k},x_{n_k})\leqslant \limsup_kd(x^*,x_{n_k}).$$
 Because $x_{n_k}\overset{w}\to x^*$ then $x^*$ is an asymptotic center for the subsequence $(x_{n_k})_{k\in\mathbb{N}}$. Last inequality implies that $Tx^*$ is also an asymptotic center for $(x_{n_k})_{k\in\mathbb{N}}$. By uniqueness of asymptotic centers then $Tx^*=x^*$. Therefore $x^*\in\Fix T$. Now if $(x_{n_m})_{m\in\mathbb{N}}$ is another subsequence converging weakly to some $x^{**}\in X$ by same arguments we obtain $x^{**}\in\Fix T$. Thus all weak cluster points of $T$ are in $\Fix T$. By Lemma \ref{p:fixT} the set $\Fix T$ is closed and convex which together with Fej\'er monotonicity of $(x_n)_{n\in\mathbb{N}}$ by virtue of Lemma \ref{l:weakcluster} imply $x^*=x^{**}$ and $x_n\overset{w}\to x^*\in\Fix T$.  
 
 Denote by $\bar{x}_n:=P_{\Fix T}x_n$ for all $n\in\mathbb{N}$. By \eqref{eq:projectionsineq} we have
 \begin{equation}
  \label{eq:Cauchy}  d(x_m,\bar{x}_m)^2+d(\bar{x}_m,\bar{x}_n)^2\leqslant d(\bar{x}_m,\bar{x}_n)^2
 \end{equation}
 which together with Fej\'er monotonicity implies
 \begin{equation}
  \label{eq:Cauchy}
 d(\bar{x}_m,\bar{x}_n)^2\leqslant d(x_m,\bar{x}_n)^2- d(x_m,\bar{x}_m)^2\leqslant d(x_n,\bar{x}_n)^2- d(x_m,\bar{x}_m)^2
 \end{equation}
 whenever $m\geqslant n$. Taking limit in \eqref{eq:Cauchy} as $m,n\to+\infty$ gives $\lim_{m,n}d(\bar{x}_m,\bar{x}_n)=0$ hence $(\bar{x}_n)_{n\in\mathbb{N}}$ is a Cauchy sequence in $\Fix T$. Because the set $\Fix T$ is closed and hence complete then $\bar{x}_n\to \bar{x}\in \Fix T$. Let $\gamma:[0,1]\to X$ be the geodesic segment joining $\bar{x}$ with $x^*$ such that $\gamma(0)=x^*,\gamma(1)=\bar{x}$. Note that $\gamma\in\Gamma_{x^*}(X)$ and $\gamma\subset \Fix T$ since $\Fix T$ is convex. Again by \eqref{eq:projectionsineq} we obtain the inequalities
 \begin{equation}
  \label{eq:chineq1}
  d(\bar{x}_n,P_{\gamma}x_n)^2+d(x_n,\bar{x}_n)^2\leqslant d(x_n,P_{\gamma}x_n)^2
 \end{equation}
 and 
 \begin{equation}
  \label{eq:chineq2}
  d(P_{\gamma}\bar{x}_n,P_{\gamma}x_n)^2+d(x_n,P_{\gamma}x_n)^2\leqslant d(x_n,P_{\gamma}\bar{x}_n)^2.
 \end{equation}
Adding \eqref{eq:chineq1} and \eqref{eq:chineq2} and passing in the limit we obtain
$$2d(\bar{x},x^*)^2\leqslant\lim\sup_n(d(x_n,P_{\gamma}\bar{x}_n)^2-d(x_n,\bar{x}_n)^2)=0.$$
Therefore $\bar{x}=x^*$. This completes the proof.
\end{proof}

\begin{corollary}
 Let $(X,d)$ be a complete $\CAT(0)$ space and $T:X\to X$ be $\alpha$-firmly nonexpansive operator. Then the iterates $x_n:=Tx_{n-1}$ for $n\in\mathbb{N}$ converge weakly to some $x^*\in\Fix T$. Moreover $\lim_nd(P_{\Fix T}x_n,x^*)=0$ whenever the condition \eqref{eq:technical2} is satisfied.
\end{corollary}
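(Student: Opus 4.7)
The corollary is essentially an immediate consequence of Theorem \ref{th:general}; the plan is to verify that the stronger hypothesis ``$T$ is $\alpha$-firmly nonexpansive'' implies both hypotheses appearing in the theorem, namely that $T$ is nonexpansive and that $T$ is quasi $\alpha$-firmly nonexpansive. Then one simply invokes the theorem.

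First I would apply Lemma \ref{l:afne implies ne} part \eqref{eq:one}, which states precisely that every $\alpha$-firmly nonexpansive operator on a $\CAT(0)$ space is nonexpansive. This takes care of the first hypothesis of Theorem \ref{th:general} at no cost. Second I would observe that the defining inequality \eqref{eq:avghadamard} for $\alpha$-firm nonexpansiveness is required to hold for all $x,y\in X$; in particular, restricting $y$ to $\Fix T$ (which must be assumed nonempty for the conclusion $x^*\in\Fix T$ to be meaningful, and which is in any case automatic from the existence of the weak limit in $\Fix T$), the same inequality yields the quasi $\alpha$-firmly nonexpansive property with the same constant $\alpha$ on $\Fix T\times X$. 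Equivalently, by Lemma \ref{l:afne implies ne} part \eqref{eq:two}, this is just the inequality
\[
d(Tx,y)^2\leqslant d(x,y)^2-\frac{1-\alpha}{\alpha}d(x,Tx)^2,\qquad\forall x\in X,\ y\in\Fix T.
\]

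With both hypotheses of Theorem \ref{th:general} verified, the conclusion follows immediately: the iterates $x_n=Tx_{n-1}$ converge weakly to some $x^*\in\Fix T$, and under the additional regularity condition \eqref{eq:technical2} the projections $P_{\Fix T}x_n$ converge strongly to the same weak limit $x^*$. There is essentially no obstacle here beyond bookkeeping; the only delicate point is the implicit nonemptiness of $\Fix T$, which one should either assume explicitly or note as being encoded in the conclusion itself (if $\Fix T=\emptyset$ the statement is vacuous).
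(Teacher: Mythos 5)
Your argument is exactly the intended one: the paper states this corollary without proof as an immediate consequence of Theorem \ref{th:general}, and your two reductions (Lemma \ref{l:afne implies ne}\eqref{eq:one} for nonexpansiveness, restriction of \eqref{eq:avghadamard} to $y\in\Fix T$ for quasi $\alpha$-firm nonexpansiveness) are precisely the bookkeeping required. One small correction to your closing remark: if $\Fix T=\emptyset$ the statement is false rather than vacuous (a translation on a Hilbert space is firmly nonexpansive yet has divergent iterates), so the hypothesis $\Fix T\neq\emptyset$ really must be added explicitly rather than regarded as self-encoding.
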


\begin{corollary}
 Let $(X,d)$ be a complete locally compact $\CAT(0)$ space and $T:X\to X$ a nonexpansive operator. If $T$ is quasi $\alpha$-firmly nonexpansive then the iterates $x_n:=Tx_{n-1}$ for $n\in\mathbb{N}$ converge strongly to some $x^*\in\Fix T$.
\end{corollary}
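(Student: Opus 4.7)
The plan is to reduce the claim to the weak convergence statement of Theorem \ref{th:general} and then upgrade weak convergence to strong convergence using the local compactness hypothesis.

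First, I would verify that the hypotheses of Theorem \ref{th:general} are satisfied: $T$ is nonexpansive by assumption, and it is quasi $\alpha$-firmly nonexpansive by assumption (in particular $\Fix T \neq \emptyset$, which is implicit in the definition of quasi $\alpha$-firmly nonexpansive). Applying Theorem \ref{th:general}, the Picard iterates $x_n := T x_{n-1}$ converge weakly to some $x^* \in \Fix T$, i.e., $x_n \overset{w}\to x^*$ in the sense of Definition \ref{d:weakconvergence}.

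Second, I would invoke the fact recalled in Subsection \ref{ss:weak convergence} that in a locally compact $\CAT(0)$ space weak and strong convergence coincide (\cite[Theorem 3.25]{Berdellima}). Since $(X,d)$ is assumed complete and locally compact, weak convergence $x_n \overset{w}\to x^*$ immediately upgrades to $d(x_n, x^*) \to 0$, i.e., strong convergence $x_n \to x^*$ with $x^* \in \Fix T$.

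There is essentially no obstacle here beyond a correct citation: the corollary is a direct consequence of two results that have already been established, the general weak convergence theorem and the coincidence of weak and strong convergence on locally compact $\CAT(0)$ spaces. One could alternatively give a self-contained argument by noting that the Fej\'er monotone sequence $(x_n)_{n\in\mathbb{N}}$ with respect to $\Fix T$ (Lemma \ref{l:fejer}) is bounded, hence, by local compactness together with the Hopf--Rinow theorem for complete locally compact length spaces, relatively compact; any strong cluster point must then coincide with the unique weak limit $x^*$, forcing $x_n \to x^*$ strongly. Either route yields the conclusion in a few lines, so the proof is expected to be very short.
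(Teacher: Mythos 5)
Your proposal is correct and matches the argument the paper intends (the corollary is stated without proof, being an immediate consequence of Theorem \ref{th:general} combined with the fact, recalled in Subsection \ref{ss:weak convergence}, that weak and strong convergence coincide in locally compact $\CAT(0)$ spaces). The alternative self-contained route via Fej\'er monotonicity, Hopf--Rinow, and uniqueness of the weak limit is also sound but unnecessary here.
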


\subsection{Cyclic projections}
Let $(X,d)$ be a complete $\CAT(0)$ space $(C_i)_{i=1}^N$ be a family of closed convex sets in $X$. Denote by $P_i:=P_{C_i}$ the metric projection onto $C_i$ for all $i=1,2,...,N$. Suppose that $\bigcap_i C_i\neq\emptyset$ and let $C:=\bigcap_iC_i$. Denote by $P_C$ the metric projection onto $C$. For a given arbitrary point $x\in X$ the \textit{cyclic projection method} is defined as follows 
\begin{equation}
 \label{eq:cyclicproj}
 x_0:=x\quad\text{and}\quad x_n:=P_{[n]}x_{n-1}, n=1,2,3,...
\end{equation}
where $[n]:=n(\mod N)+1\in\{1,2,...,N\}$. In particular $x_{nN}=(P_NP_{N-1}...P_1)^nx_0$. 

% Given a set $S\subseteq H$ and a sequence $(x_n)_{n\in\mathbb{N}}$ in $H$ we say $(x_n)_{n\in\mathbb{N}}$ is {\em Fej\'er monotone with respect to $S$} whenever
% \begin{equation}
%  d(x_n,y)\leqslant d(x_{n-1},y)\qquad \forall y\in S
% \end{equation}
% \begin{lemma}
% \label{Fejermonotone}
%  The sequence $(x_n)_{n\in\mathbb{N}}$ in \eqref{eq:cyclicproj} is Fej\'er monotone with respect to $C$. In particular it is bounded.
% \end{lemma}
% \begin{proof}
%  Let $y\in C$ then $y\in C_i$ for all $i=1,2,...,N$. Since $P_i$ is nonexpansive for all $i=1,2,...,N$ then 
%  $$d(x_n,y)=d(P_{[n]}x_{n-1},y)=d(P_{[n]}x_{n-1},P_{[n]}y)\leqslant d(x_{n-1},y).$$
%  From last inequality it follows that $d(x_n,y_0)\leqslant d(x_0,y_0)<+\infty$ for some $y_0\in C$. Hence $(x_n)_{n\in\mathbb{N}}$ is bounded.
% \end{proof}
\begin{proposition}
\label{p:P}
Let $P:=P_{N}P_{N-1}...P_1$. Then $\Fix P=\Fix P_C$.
\end{proposition}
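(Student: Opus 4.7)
The goal reduces to showing $\Fix P = C$, since $\Fix P_C = C$ by the defining property of the metric projection. The easy inclusion $C \subseteq \Fix P$ is immediate: any $x \in C$ satisfies $P_i x = x$ for every $i$, so $Px = x$. The heart of the argument is therefore the reverse inclusion $\Fix P \subseteq C$, which I would prove by induction on the number $N$ of sets in the cycle.

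For the base case $N = 1$ we have $P = P_1$ and $\Fix P_1 = C_1 = C$. For the inductive step with $N \geq 2$, I would split $P = T \circ S$ with $S := P_1$ and $T := P_N P_{N-1} \cdots P_2$. By Proposition \ref{p:projectionfne}, $S$ is firmly nonexpansive (equivalently, $\alpha$-firmly nonexpansive with $\alpha = 1/2$), while each $P_j$ is nonexpansive, so $T$ is nonexpansive as a composition of nonexpansive maps. Applying the inductive hypothesis to the family $\{C_2, \ldots, C_N\}$, whose intersection contains $C \neq \emptyset$, yields $\Fix T = \bigcap_{i=2}^N C_i$, and hence $\Fix T \cap \Fix S = \bigcap_{i=1}^N C_i = C \neq \emptyset$. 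All hypotheses of Lemma \ref{l:intersections} are then in place, and it gives $\Fix P = \Fix TS = \Fix T \cap \Fix S = C$, completing the induction.

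The design choice that keeps the argument clean is splitting $P$ as $T \circ S$ with $S$ a single projection, rather than peeling factors off the outside: then the outer operator $T$ only needs to be nonexpansive, which is automatic, while the inner factor $S$ is automatically $\alpha$-firmly nonexpansive by Proposition \ref{p:projectionfne}. In contrast, had we chosen $S = P_{N-1} \cdots P_1$ and $T = P_N$, we would have had to upgrade a composition of firmly nonexpansive operators to an $\alpha$-firmly nonexpansive operator, which in the $\CAT(0)$ setting requires the extra inequality \eqref{eq:avgineqH} (or the relaxed \emph{strictly quasi nonexpansive} variant noted in the remark following Lemma \ref{keylemmacvx}). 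I do not expect any serious obstacle in executing the plan; the proof is essentially an inductive bookkeeping exercise leveraging Lemma \ref{l:intersections} and Proposition \ref{p:projectionfne}.
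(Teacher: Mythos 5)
Your proof is correct, and it takes a slightly leaner route than the paper. The paper proves $\Fix P=\Fix P_C$ by iterating Theorem \ref{compositionsthm}: each $P_i$ is quasi firmly nonexpansive by Proposition \ref{p:projectionfne}, so the composition $P$ is quasi firmly nonexpansive and $\Fix P=\bigcap_i\Fix P_i=C=\Fix P_C$ falls out of that theorem (which itself invokes Lemma \ref{l:intersections} at the start of its proof). You instead bypass Theorem \ref{compositionsthm} entirely and induct directly on Lemma \ref{l:intersections}, with the decomposition $P=(P_N\cdots P_2)\circ P_1$ chosen so that the inner factor is a single projection (hence $\alpha$-firmly nonexpansive with $\alpha=1/2$ for free) and the outer factor only needs nonexpansiveness (automatic for a composition of projections). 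This is a genuine economy: you never need to verify inequality \eqref{eq:avgineqH} or track the composite averaging constant \eqref{eq:alpha comp}, since the fixed-point identity is all Proposition \ref{p:P} asks for. What the paper's heavier route buys is that $P$ is itself quasi firmly nonexpansive, a fact it needs anyway in the proof of Theorem \ref{th:cyclic}; your argument does not deliver that, but it is not part of the statement. Your observation about why the alternative split $S=P_{N-1}\cdots P_1$, $T=P_N$ would be costlier is also accurate.
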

\begin{proof}
By Proposition \ref{p:projectionfne} for every $i=1,2,...,N$ we have
$$d(P_ix,P_iy)^2\leqslant\Delta_{P_i}(x,y),\qquad\forall x,y\in X.$$
In particular 
$$d(P_ix,y)^2\leqslant\Delta_{P_i}(x,y), \forall x\in X, \forall y\in \Fix P_i.$$
% from \eqref{eq:projectionsineq}   
% we obtain 
% $$d(x,P_ix)^2+d(P_ix,y)^2\leqslant d(x,y)^2,\qquad\forall y\in C_i.$$
% On the other hand, since $\Fix P_i=C_i$, short calculations show that the last inequality is equivalent to 
% $$d(P_ix,y)^2\leqslant\Delta_{P_i}(x,y),\qquad\forall x\in H, \forall y\in \Fix P_i.$$
Therefore $P_i$ is quasi firmly nonexpansive operator for every $i=1,2,...,n$. Note that $\Fix P=C$ and $\Fix P_i=C_i$ for every $i=1,2,...,n$.
By definition the operator $P$ is a composition of quasi firmly nonexpansive operators. An iterative application of Theorem \ref{compositionsthm} implies that $P$ is also quasi firmly nonexpansive and $$\Fix P=\bigcap_i\Fix P_i=\bigcap_i C_i=C=\Fix P_C.$$
\end{proof}
 
% Moreover recall from subsection \ref{ss:weak convergence} that weak convergence and $\Delta$-convergence coincide on bounded sets. 

\begin{thm}
\label{th:cyclic}
Let $(X,d)$ be a complete $\CAT(0)$ space.
The sequence $(x_n)_{n\in\mathbb{N}}$ generated by \eqref{eq:cyclicproj} converges weakly to some element $x^*\in C$.  Moreover $\lim_nd(P_Cx_n,x^*)=0$ whenever 
 \begin{equation}
  \label{eq:technical}
  \lim\sup_n(d(x_n,P_{\gamma}P_Cx_n)-d(x_n,P_Cx_n))=0,\qquad\forall\gamma\in\Gamma_{x^*}(C)
 \end{equation}
 where $\Gamma_{x^*}(C):=\{\gamma\in\Gamma_{x^*}(H)\hspace{0.1cm}|\hspace{0.1cm}\gamma\subset C\}$.
\end{thm}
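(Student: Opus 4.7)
The plan is to reduce the convergence claim to Theorem \ref{th:general} applied to the composition operator $P := P_N P_{N-1}\cdots P_1$, and then lift the results from the subsequence $(x_{nN})$ to the whole sequence. By Proposition \ref{p:projectionfne} every $P_i$ is firmly nonexpansive with $\Fix P_i = C_i$, hence quasi firmly nonexpansive with constant $1/2$. Since $\bigcap_i\Fix P_i = C\neq\emptyset$, iterating Theorem \ref{compositionsthm} shows $P$ is quasi $\alpha$-firmly nonexpansive with some $\alpha_P\in(0,1)$, and Proposition \ref{p:P} gives $\Fix P = C$. Because $P$ is also a composition of nonexpansive operators, Theorem \ref{th:general} yields $x_{nN} = P^n x_0 \overset{w}\to x^*$ for some $x^*\in C$.

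To lift weak convergence to the full sequence I first establish $d(x_{n-1},x_n)\to 0$. By Lemma \ref{l:afne implies ne}\eqref{eq:two}, for any $y\in C\subseteq C_{[n]}$,
$$d(x_{n-1},x_n)^2 = d(x_{n-1},P_{[n]}x_{n-1})^2 \leqslant d(x_{n-1},y)^2 - d(x_n,y)^2,$$
so the telescoping series converges and $d(x_{n-1},x_n)\to 0$. For fixed $k\in\{0,1,\ldots,N-1\}$, iterating the triangle inequality gives $d(x_{nN+k},x_{nN})\to 0$. For any $\gamma\in\Gamma_{x^*}(X)$, nonexpansiveness of $P_\gamma$ (Proposition \ref{p:projectionfne}) implies
$$d(x^*,P_\gamma x_{nN+k}) \leqslant d(x^*,P_\gamma x_{nN}) + d(P_\gamma x_{nN},P_\gamma x_{nN+k}) \leqslant d(x^*,P_\gamma x_{nN}) + d(x_{nN},x_{nN+k}) \to 0,$$
so each residue class subsequence, and hence the full sequence $(x_n)$, converges weakly to $x^*$.

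For the strong convergence of the projections I mirror the closing argument of the proof of Theorem \ref{th:general} applied directly to $(x_n)$. Since each $P_i$ is quasi nonexpansive and $C\subseteq C_i$, the sequence $(x_n)$ is Fej\'er monotone with respect to $C$. Setting $\bar{x}_n := P_C x_n$ and applying \eqref{eq:projectionsineq} together with Fej\'er monotonicity gives, for $m\geqslant n$,
$$d(\bar{x}_m,\bar{x}_n)^2 \leqslant d(x_m,\bar{x}_n)^2 - d(x_m,\bar{x}_m)^2 \leqslant d(x_n,\bar{x}_n)^2 - d(x_m,\bar{x}_m)^2,$$
so $(\bar{x}_n)$ is Cauchy in the closed set $C$ and converges to some $\bar{x}\in C$. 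Taking $\gamma$ to be the geodesic from $x^*$ to $\bar{x}$ — which lies in $C$ by convexity — and applying \eqref{eq:projectionsineq} twice (once for projection onto $C$ and once onto $\gamma$), the hypothesis \eqref{eq:technical} forces $2d(\bar{x},x^*)^2 \leqslant 0$, whence $\bar{x}=x^*$. The main subtlety I anticipate is the lifting of weak convergence from the subsequence $(x_{nN})$ to the full $(x_n)$, which hinges on the interplay between $d(x_{n-1},x_n)\to 0$ and nonexpansiveness of the geodesic projection $P_\gamma$, but this is routine once the setup is in place.
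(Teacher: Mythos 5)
Your proof is correct and follows the same route as the paper: reduce to Theorem \ref{th:general} applied to the composition $P=P_NP_{N-1}\cdots P_1$ via Proposition \ref{p:projectionfne}, Theorem \ref{compositionsthm} and Proposition \ref{p:P}. The one place you go beyond the paper is the lifting from the subsequence $(x_{nN})$ to the full sequence, which the paper dismisses with ``it suffices to show for the subsequence''; your argument for it (telescoping $d(x_{n-1},x_n)^2\leqslant d(x_{n-1},y)^2-d(x_n,y)^2$ from quasi firm nonexpansiveness of each $P_{[n]}$, then nonexpansiveness of $P_\gamma$ on each residue class, and rerunning the Fej\'er--Cauchy argument for $P_Cx_n$ on the full sequence) is sound and supplies a detail the paper's proof leaves implicit.
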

\begin{proof}
It suffices to show for the subsequence $(\hat{x}_{n})_{n\in\mathbb{N}}$ where $\hat{x}_{n}:=x_{nN}$ for each $n\in\mathbb{N}$. By definition $\hat{x}_{n}=P^nx_0$ where $P=P_{N}P_{N-1}...P_1$. 
By Proposition \ref{p:projectionfne} the operators $P_i$ are nonexpansive and therefore so is $P$. Moreover by Theorem \ref{compositionsthm} it follows that $P$ is quasi firmly nonexpansive. By Proposition \ref{p:P} we have $\Fix P=C$. An application of Theorem \ref{th:general} then yields the conclusion. This completes the proof.
\end{proof}

% Since in a complete locally compact $\CAT(0)$  space weak and strong convergence coincide \cite[Theorem 3.25, Corollary 3.26]{Berdellima} we obtain the following immediate result.
\begin{corollary}
 The sequence $(x_n)_{n\in\mathbb{N}}$ generated by \eqref{eq:cyclicproj} converges strongly to some element $x^*\in C$ whenever the underlying $\CAT(0)$ space $(X,d)$ is complete and locally compact.
\end{corollary}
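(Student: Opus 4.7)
The plan is to bootstrap directly from the weak convergence statement of Theorem \ref{th:cyclic} using the equivalence of weak and strong convergence in locally compact $\CAT(0)$ spaces that is recalled in Subsection \ref{ss:weak convergence} (citing \cite[Theorem 3.25]{Berdellima}). So the proof is essentially a one-line invocation of two earlier results.

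More concretely, I would proceed as follows. First, since $(X,d)$ is complete and $\CAT(0)$ and $(C_i)_{i=1}^N$ is a family of closed convex sets with $C = \bigcap_i C_i \neq \emptyset$, Theorem \ref{th:cyclic} applies and yields a point $x^* \in C$ such that $x_n \overset{w}\to x^*$. Second, because $(X,d)$ is assumed locally compact, the coincidence of weak and strong convergence on locally compact $\CAT(0)$ spaces upgrades this to $d(x_n, x^*) \to 0$, which is exactly the claim.

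If one prefers a self-contained argument that avoids appealing to the abstract equivalence of the two convergences, the alternative route is to exploit properness: a complete locally compact $\CAT(0)$ space is proper (closed bounded balls are compact), and the sequence $(x_n)_{n\in\mathbb{N}}$ is bounded by Fej\'er monotonicity with respect to $C$ (Lemma \ref{l:fejer} applied with $T = P_N P_{N-1}\cdots P_1$, which is quasi firmly nonexpansive by Theorem \ref{compositionsthm} together with Proposition \ref{p:projectionfne}). So every subsequence has a strongly convergent sub-subsequence, whose limit must be the unique weak limit $x^*$; since every subsequence of $(x_n)_{n\in\mathbb{N}}$ has a sub-subsequence converging strongly to the same $x^*$, the full sequence converges strongly to $x^*$.

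I do not expect any real obstacle here: the only minor technical point to keep in mind is that the proof of Theorem \ref{th:cyclic} argues on the block subsequence $\hat{x}_n := x_{nN}$, and to transfer strong convergence from $(\hat{x}_n)$ to $(x_n)$ one uses that each intermediate iterate $x_{nN+k} = P_k P_{k-1} \cdots P_1 x_{nN}$ is obtained by nonexpansive maps that all fix $x^* \in C$, so $d(x_{nN+k}, x^*) \leqslant d(\hat{x}_n, x^*) \to 0$.
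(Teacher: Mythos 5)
Your proposal is correct and follows exactly the route the paper intends: apply Theorem \ref{th:cyclic} to get weak convergence of $(x_n)_{n\in\mathbb{N}}$ to some $x^*\in C$, then invoke the coincidence of weak and strong convergence in locally compact $\CAT(0)$ spaces recalled in Subsection \ref{ss:weak convergence}. Your supplementary remarks (the proper-space/Fej\'er argument and the transfer from the block subsequence $\hat{x}_n=x_{nN}$ to the full sequence via nonexpansiveness of the intermediate projections) are sound and, if anything, more careful than the paper, which states the corollary without proof.
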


\textbf{Historical remarks:} Cyclic projections algorithm has a long history in mathematics. The basic theorem for the case of two intersecting subspaces in a Hilbert space is due to von Neumann \cite{vNeumann}, thereafter it was generalized by Halpern \cite{Hcyclic} for an arbitrary finite number of intersecting subspaces. However this generalization was in fact discovered earlier in 1937 independently by S. Kaczmarz for solving a system of linear equations \cite{Kaczmarz}. This iterative method is also known as {\em Kaczmarzs algorithm} or {\em Kaczmarzs method}. 
The first statement about cyclic projections for a finite number of intersecting closed convex sets in a Hilbert space was proved by Bregman \cite{Bregman}. Baillon and Brezis \cite{Brezis} showed that $(P_C(x_n))_{n\in\mathbb{N}}$ converged in norm to some element of $C$. Bauschke \cite[ Theorem
 6.2.2 $(iii)$]{BauschkePhD} established in his PhD Thesis that the norm limit of $(P_C(x_n))_{n\in\mathbb{N}}$ was in fact the weak limit of $(x_n)_{n\in\mathbb{N}}$. 
%  Later in a series of papers Deutsch and Hundal studied angles between the convex sets \cite{Deutsch}, the norm of nonlinear operators \cite{DeutschII}, and regularity of convex sets \cite{DeutschIII} all these in relation to the rate of convergence for cyclic projections method. 
In this historical context our Theorem \ref{th:cyclic} can be regarded as an extension of the classical result in Hilbert spaces to the broader class of complete $\CAT(0)$ spaces. However a distinction has to be made. Whereas the strong convergence of $(P_Cx_n)_{n\in\mathbb{N}}$ to the weak limit $x^*$ of $(x_n)_{n\in\mathbb{N}}$ is unconditional in a Hilbert space, in a complete $\CAT(0)$ space we add condition \eqref{eq:technical} necessity of which needs to be investigated.
%  This appears to be related to \cite[Theorem 2.6]{Kakavandi}. However the necessity of \eqref{eq:technical} needs to be investigated. 
% Note that in a Hilbert space \eqref{eq:technical} holds. Indeed after proper rearrangements we obtain
% \begin{align*}
% 0\leqslant\lim\sup_n(\|x_n-P_{\gamma}P_Cx_n\|^2-\|x_n&-P_Cx_n\|^2)\leqslant
%  2\lim\sup_n(\langle x_n,P_Cx_n-P_{\gamma}P_Cx_n\rangle)\\&+\lim\sup_n(\langle P_{\gamma}P_Cx_n,P_{\gamma}P_Cx_n\rangle-\langle P_Cx_n,P_Cx_n\rangle)
% \end{align*}
% In the limit then $\leqslant 2\langle x^*,\bar{x}-P_{\gamma}\bar{x}\rangle+\|P_{\gamma}\bar{x}\|^2-\|\bar{x}\|^2=\|x^*-P_{\gamma}\bar{x}\|^2-\|x^*-\bar{x}\|^2\leqslant 0$ implies condition \ref{eq:technical}.

\subsection{Averaged projections}
Let $(C_i)_{i=1}^N$ be a family of closed convex sets in a complete $\CAT(0)$ space with nonempty intersection $C\neq\emptyset$. Let $P_i$ denote the metric projection onto $C_i$ as before and $P_C$ the metric projection onto $C$. Let 
\begin{equation}
 \label{avgprojop}
 P:=\frac{1}{N}P_1\oplus\frac{1}{N}P_2\oplus...\oplus\frac{1}{N}P_N
\end{equation}
For a given element $x\in H$ the {\em averaged projections method} is defined as 
\begin{equation}
 \label{avgprojalg}
 x_0:=x\quad\text{and}\quad x_n:=Px_{n-1},n=1,2,3,...
\end{equation}
Note that by definition \eqref{avgprojop} the element $Px_{n-1}$ solves the problem 
\begin{equation}
 \label{avgprojmin}
 \min_{y\in X}\sum_{i=1}^N\frac{1}{N}d(y,P_ix_{n-1})^2,\qquad n=1,2,3,...
\end{equation}
For each $n\in\mathbb{N}$ the element $Px_{n-1}$ exists and it is unique. 

\begin{proposition}
 \label{avgprojprop}
 The operator $P$ defined by \eqref{avgprojop} is a mapping from $X$ onto $X$ that is quasi firmly nonexpansive. Moreover $\Fix P=C$.
\end{proposition}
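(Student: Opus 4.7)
The statement is essentially a direct application of the calculus already developed, so the plan is to chain together three previously established results rather than do genuinely new work.

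First, I would verify that each coordinate projection $P_i$ is quasi $\alpha$-firmly nonexpansive with constant $\alpha_i = 1/2$. This is immediate from Proposition \ref{p:projectionfne}, which shows that $P_i$ is firmly nonexpansive on $X \times X$; in particular, setting $y \in \Fix P_i = C_i$ in the defining inequality \eqref{eq:avghadamard} gives the quasi $\alpha$-firmly nonexpansive condition with $\alpha_i = 1/2$. So each $P_i$ enters Theorem \ref{cxvcombooperator} with the same parameter.

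Second, I would check that the operator $P$ from \eqref{avgprojop} is well-defined as a self-map $P: X \to X$. For any $x \in X$, the functional $y \mapsto \sum_{i=1}^N \frac{1}{N} d(y, P_i x)^2$ is a finite sum of strongly convex, continuous functions, hence strongly convex (with parameter $\mu = 2$) and lower semicontinuous on the complete $\CAT(0)$ space $X$. By Proposition \ref{prop1}, it attains a unique minimizer, which is $Px$. So $P$ is indeed a single-valued self-map of $X$.

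Third, since $\bigcap_{i=1}^N \Fix P_i = \bigcap_{i=1}^N C_i = C \neq \emptyset$ by hypothesis, the premises of Theorem \ref{cxvcombooperator} are satisfied with weights $w_i = 1/N$. That theorem yields quasi $\alpha$-firmly nonexpansiveness of $P$ with constant $\alpha = \max_i \alpha_i = 1/2$, which is precisely quasi firmly nonexpansiveness. Simultaneously, Lemma \ref{keylemmacvx} applied to the same family gives $\Fix P = \bigcap_{i=1}^N \Fix P_i = \bigcap_{i=1}^N C_i = C$.

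There is really no main obstacle in this proof; it is a bookkeeping exercise matching the structure of $P$ to the hypotheses of the convex combination theorem. The only minor care needed is ensuring the correct reading of the firmly nonexpansive inequality from Proposition \ref{p:projectionfne} as implying the quasi variant at fixed points, and noting that the maximum of the parameters $\alpha_i$ is still $1/2$ so that the conclusion is \emph{firm} rather than merely $\alpha$-firm nonexpansiveness.
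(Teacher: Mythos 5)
Your proposal is correct and follows essentially the same route as the paper: both deduce quasi firm nonexpansiveness of $P$ from Theorem \ref{cxvcombooperator} applied to the family $(P_i)$ with weights $w_i=1/N$, and both obtain $\Fix P=\bigcap_i C_i=C$ from Lemma \ref{keylemmacvx}. The only difference is that you spell out two steps the paper leaves implicit, namely that each $P_i$ is quasi firmly nonexpansive (via Proposition \ref{p:projectionfne}) and that $P$ is a well-defined single-valued self-map (via Proposition \ref{prop1}), which is a reasonable amount of extra care rather than a different argument.
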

\begin{proof}
 It is evident by definition that $P$ is a mapping from $X$ onto $X$. By Theorem \ref{cxvcombooperator} the operator $P$ is quasi firmly nonexpansive as a convex combination of quasi firmly nonexpansive operators with weights $w_i=1/N$ for all $i=1,2,...,N$. Since by assumptions $C\neq\emptyset$ and $P_i$ is quasi firmly nonexpansive for all $i=1,2,...,N$ then Lemma \ref{keylemmacvx} implies $$\Fix P=\bigcap_i\Fix P_i=\bigcap_i C_i=C.$$ This completes the proof.
\end{proof}

% \begin{lemma}
% \label{avgprojlemma}
%  The sequence $(x_n)_{n\in\mathbb{N}}$ in \eqref{avgprojalg} is Fej\'er monotone with respect to $C$. In particular it is bounded. 
% \end{lemma}
% \begin{proof}
%  By Proposition \ref{avgprojprop} if $y\in C$ then $y\in\Fix P$. Therefore
%  $$d(x_n,y)=d(Px_{n-1},Py)\leqslant d(x_{n-1},y),\quad \forall n\in\mathbb{N},\forall y\in C.$$
%  The last inequality follows from $P$ being nonexpansive on $\Fix P$. In particular it follows $d(x_n,y)\leqslant d(x_0,y)<+\infty$ and hence $(x_n)_{n\in\mathbb{N}}$ is bounded. 
% \end{proof}

\begin{thm}
\label{th:avg}
Let $(X,d)$ be a complete $\CAT(0)$ space.
The sequence $(x_n)_{n\in\mathbb{N}}$ generated by \eqref{avgprojalg} converges weakly to some element $x^*\in C$.  Moreover $\lim_nd(P_Cx_n,x^*)=0$ whenever \eqref{eq:technical} is satisfied.
\end{thm}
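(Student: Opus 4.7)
The plan is to reduce Theorem \ref{th:avg} to an application of Theorem \ref{th:general}, in exact parallel with the treatment of cyclic projections in Theorem \ref{th:cyclic}. The averaged projection $P$ of \eqref{avgprojop} plays the role of $T$ in Theorem \ref{th:general}, and the iteration \eqref{avgprojalg} is simply $x_n = Px_{n-1}$, so the task is to verify the two hypotheses of Theorem \ref{th:general} for $P$: that $P$ is nonexpansive, and that $P$ is quasi $\alpha$-firmly nonexpansive.

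The second hypothesis is already supplied by Proposition \ref{avgprojprop}. Each metric projection $P_i$ is firmly nonexpansive by Proposition \ref{p:projectionfne}, hence quasi $\alpha$-firmly nonexpansive with $\alpha_i = 1/2$, so Theorem \ref{cxvcombooperator} yields that the convex combination $P$ is quasi $\alpha$-firmly nonexpansive with $\alpha = \max_i \alpha_i = 1/2$, and Lemma \ref{keylemmacvx} combined with the standing assumption $C \neq \emptyset$ gives $\Fix P = \bigcap_i \Fix P_i = \bigcap_i C_i = C$.

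For the first hypothesis, each $P_i$ is nonexpansive by Proposition \ref{p:projectionfne} together with Lemma \ref{l:afne implies ne} \eqref{eq:one}, and it remains to pass from the two-operator convex-combination statement quoted in Section \ref{s:concepts} to its $n$-operator barycentric analogue. This is precisely the Sturm barycenter inequality in complete $\CAT(0)$ (Hadamard) spaces, $d(Px, Py)^2 \leq \sum_i w_i d(P_i x, P_i y)^2$, which together with the nonexpansiveness of each $P_i$ gives $d(Px, Py)^2 \leq \sum_i w_i d(x,y)^2 = d(x,y)^2$; a direct self-contained derivation applies the minimizer inequality \eqref{eq:strcvxidentity} to the strongly convex functionals $F_x(z) := \sum_i w_i d(z, P_i x)^2$ and $F_y(z) := \sum_i w_i d(z, P_i y)^2$ evaluated at $z = Py$ and $z = Px$ respectively, and the statement itself is recorded in \cite[Chapter 7]{Berdellima}.

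With both hypotheses verified, Theorem \ref{th:general} applied to $T = P$ yields weak convergence of the iterates $(x_n)_{n\in\mathbb{N}}$ to some $x^* \in \Fix P = C$. For the second assertion, the identity $\Fix T = C$ reduces condition \eqref{eq:technical2} to precisely \eqref{eq:technical}, whereupon the corresponding clause of Theorem \ref{th:general} delivers $\lim_n d(P_C x_n, x^*) = 0$. The only nontrivial point in the whole argument is the $n$-operator nonexpansiveness of the barycentric convex combination, where a genuine $\CAT(0)$ ingredient (the variance/barycenter inequality) must be invoked; everything else is bookkeeping via results already established earlier in the paper.
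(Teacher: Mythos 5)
Your proposal is correct and follows essentially the same route as the paper: reduce to Theorem \ref{th:general} with $T=P$, using Proposition \ref{avgprojprop} for quasi firm nonexpansiveness and $\Fix P=C$, exactly as the paper does by analogy with Theorem \ref{th:cyclic}. You are in fact slightly more careful than the paper, which leaves implicit the verification that the barycentric convex combination $P$ is nonexpansive (a hypothesis of Theorem \ref{th:general} that, unlike in the cyclic case where $P$ is a composition, genuinely requires the Sturm-type barycenter contraction you invoke).
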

\begin{proof}
 Similar to the proof of Theorem \ref{th:cyclic} but using Proposition \ref{avgprojprop} instead.
\end{proof}

\begin{corollary}
In a complete locally compact $\CAT(0)$ space
the sequence $(x_n)_{n\in\mathbb{N}}$ generated by \eqref{avgprojalg} converges strongly to some element $x^*\in C$.
\end{corollary}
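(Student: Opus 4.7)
The plan is to reduce the corollary directly to Theorem~\ref{th:avg} and then invoke the coincidence of weak and strong convergence in the locally compact setting. Since $(X,d)$ is a complete $\CAT(0)$ space (local compactness does not disturb the completeness hypothesis), Theorem~\ref{th:avg} applies verbatim: the iterates $x_n$ generated by \eqref{avgprojalg} converge weakly to some $x^*\in C$, that is, $x_n\overset{w}\to x^*$ in the sense of Definition~\ref{d:weakconvergence}.

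The second ingredient is the fact recorded in Section~\ref{ss:weak convergence} that on a locally compact $\CAT(0)$ space weak convergence and strong (metric) convergence coincide; this is stated as \cite[Theorem 3.25]{Berdellima}. Applying this to the weakly convergent sequence $(x_n)_{n\in\mathbb{N}}$ immediately yields $d(x_n,x^*)\to 0$, which is the desired strong convergence. No new calculation is required; we do not even need to check the regularity condition \eqref{eq:technical} in Theorem~\ref{th:avg}, since we are not asserting anything about $P_Cx_n$ here.

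The only step that needs a word of justification is why the weak convergence statement of Theorem~\ref{th:avg} applies, given that its hypotheses are stated for a general complete $\CAT(0)$ space: local compactness is an additional assumption and does not interfere. There is no real obstacle; the corollary is essentially a two-line consequence of Theorem~\ref{th:avg} and the equivalence of weak and strong convergence on locally compact $\CAT(0)$ spaces. Thus the proof reads simply: by Theorem~\ref{th:avg} the sequence $(x_n)_{n\in\mathbb{N}}$ converges weakly to $x^*\in C$; by \cite[Theorem 3.25]{Berdellima} weak convergence implies strong convergence on a locally compact complete $\CAT(0)$ space, so $x_n\to x^*$ in the metric $d$.
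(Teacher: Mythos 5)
Your proposal is correct and matches the argument the paper intends: the corollary is stated without proof precisely because it follows by combining the weak convergence from Theorem~\ref{th:avg} with the coincidence of weak and strong convergence on locally compact complete $\CAT(0)$ spaces recorded in Section~\ref{ss:weak convergence}. You are also right that the regularity condition \eqref{eq:technical} is not needed, since the corollary makes no claim about the projected sequence $(P_Cx_n)_{n\in\mathbb{N}}$.
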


Note that in literature \eqref{avgprojop}-\eqref{avgprojalg} is also known as the {\em Cimmino's method} (see for instance Bauschke and Borwein's review on this subject \cite{BauschkeBorwein} and references therein).

\bibliographystyle{plain}
\bibliography{literature}

%=====================================================
% 
% 
% \begin{thebibliography}{99}
% 
% \bibitem{Attouch1} H. Attouch.  
% {\it Famille d'op\'erateurs maximaux monotones et m\'esurabilit\'e}, 
% Ann. Mat. Pura Appl., (4) 120, 35-111 1979.
% \bibitem{Attouch} H. Attouch.  
% {\it Variational convergence for functions and operators}, Applicable Mathematics Series, Pitman (Advanced Publishing Program), Boston, MA, 1984.
% \bibitem{Bacak} M. Ba\v cak.  
% {\it Convex analysis and optimization in Hadamard spaces}, vol. 22 of 
% De Gruyter Series in Nonlinear Analysis and Applications, 
% De Gruyter, Berlin, 2014.
% \bibitem{Bacak2} M. Ba\v cak, M. Montag, and G. Steidl.  
% {\it Convergence of functions and their Moreau envelopes on Hadamard spaces}, Journal of Approximation Theory, (C) 224, 1-12 2017.
% \bibitem{Bacak3} M. Ba\v cak.  
% {\it Old and new challenges in Hadamard spaces}, arXiv:1807.01355 2018.
% \end{thebibliography}

\end{document}